\theoremstyle{plain}
\newtheorem{theorem}{\indent\bf Theorem}[section]
\newtheorem{lemma}[theorem]{\indent\bf Lemma}
\newtheorem{corollary}[theorem]{\indent\bf Corollary}
\newtheorem{proposition}[theorem]{\indent\bf Proposition}
\newtheorem{orphtheorem}{\indent\bf Theorem}
\newtheorem{orphlemma}[orphtheorem]{\indent\bf Lemma}
\theoremstyle{definition}
\newtheorem{definition}[theorem]{\indent\bf Definition}
\newtheorem{remark}[theorem]{\indent\bf Remark}
\newtheorem{notation}[theorem]{\indent\bf Notation}
\numberwithin{equation}{section}
\DeclareMathOperator{\lcm}{\mathrm{lcm}}
\DeclareMathOperator{\ot}{\mathrm{o}}
\newcommand{\bb}[1]{\mathbb{#1}}
\begin{document}
	\title[The order of the product of two elements]{The order of the product of two elements in finite nilpotent groups}
	\author[C.M. Bonciocat]{Ciprian Mircea Bonciocat}
	\address{Address: University of California, Los Angeles, CA 90095}
	
	\email{cmbonciocat <at> ucla <dot> edu}

	\keywords{order of an element, nilpotent group, regular group, periodic group, complex commutator, commutator calculus}
	\subjclass[2010]{20A05, 20D15, 20F12, 20F18, 20F50, 20F69, 11A07}
	
	\begin{abstract}
		An old problem in group theory is that of describing how the order of an element behaves under multiplication. To generalize some classical bounds concerning the order $\mathrm o(ab)$ of two elements $a, b$ in a finite abelian group to the non-commutative case, we replace $\mathrm o(ab)$ with a notion of mutual order $\mathrm o(a, b)$, defined as the least positive integer $n$ such that $a^nb^n = 1$. Motivated by this, we then compare $\mathrm o(ab)$ and $\mathrm o(a, b)$ in finite nilpotent groups, and show that in a group of class $\gamma$, the ratio $\mathrm o(ab)/\mathrm o(a, b)$ lies in some fixed finite set $S(\gamma) \subset \mathbb Q$, whose elements do not involve prime factors exceeding $\gamma$. In particular, we generalize a result of P. Hall, which asserts that $\mathrm o(ab) = \mathrm o(a, b)$ in $p$-groups with $p > \gamma$. We end with a more detailed analysis for groups of class 2, which allows one to give a more explicit description of $\mathrm o(ab)/\mathrm o(a, b)$.
	\end{abstract}

	\maketitle
	\setcounter{tocdepth}{2} 
	\let\oldtocsubsection=\tocsubsection
	\renewcommand{\tocsubsection}[2]{\hspace{1em}\oldtocsubsection{#1}{#2}}
	{\hypersetup{linkcolor=black}\tableofcontents} 
	\newpage
	
\section{Introduction} \label{SIntro}
	
	A challenging old problem in group theory is, given two elements $a, b$ in a group $G$, of orders $m$ and $n$ respectively, to find information on the order of the product $ab$. Understanding even the easier problem of when $ab$ has finite order would have great implications in group theory, for instance in the study of finitely-generated groups in which the generators have finite order. An example of a difficult problem related to this is Burnside's problem, which asks whether a finitely-generated periodic group is necessarily finite. A negative answer to this has been provided in 1964 by Golod and Shafarevich \cite{Golod}, \cite{GS}, although many variants of this question still remain unsolved to this day. For more information on this subject, we mention a few standard references: Kostrikin \cite{Kostrikin},  Novikov and Adian \cite{Adian}, Ivanov and Ol'shanskii \cite{Ivanov}, \cite{IvOls}, \cite{Ols}, Zelmanov \cite{Zelmanov1}, \cite{Zelmanov2} and Lys\"enok \cite{Lysenok}.
	
	Even in the simple case of abelian groups, no effective formula for the order of $ab$ seems to be available in the literature. We present here two well-known exercises from classical texts in abstract algebra, which address this problem in very special cases:
	
	\begin{orphlemma}\label{orphlema1}
		Let $a$ and $b$ be two elements of a finite abelian group with
		orders $m$ and $n$, respectively. If $m$ and $n$ are co-prime, then $ab$ has
		order $mn$.
	\end{orphlemma}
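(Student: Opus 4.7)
The plan is to prove divisibility in both directions, namely $\mathrm{o}(ab) \mid mn$ and $mn \mid \mathrm{o}(ab)$, using commutativity and the coprimality of $m$ and $n$.

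First I would use the abelian hypothesis to rearrange $(ab)^{mn} = a^{mn} b^{mn} = (a^m)^n (b^n)^m = 1$, which immediately gives $\mathrm{o}(ab) \mid mn$. This is the easy half.

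For the reverse divisibility, let $k = \mathrm{o}(ab)$, so that $a^k b^k = 1$, i.e.\ $a^k = b^{-k}$. I would argue that this common element has order dividing both $m$ and $n$: its order divides $m$ because it equals $a^k$ and $a$ has order $m$, and it divides $n$ because it equals $b^{-k}$ and $b$ has order $n$. Since $\gcd(m, n) = 1$, this order must be $1$, giving $a^k = 1 = b^k$. Hence $m \mid k$ and $n \mid k$, and coprimality upgrades this to $mn \mid k$. Combining with the first step yields $\mathrm{o}(ab) = mn$.

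There is no real obstacle here; the only place one needs to be a bit careful is the coprimality trick showing that an element which is simultaneously a power of $a$ and a power of $b^{-1}$ must be trivial, which is the whole reason the coprimality hypothesis is essential (as simple examples like $b = a^{-1}$ show when the orders share a common factor).
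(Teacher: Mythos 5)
Your argument is correct and complete: the divisibility $\ot(ab)\mid mn$ from commutativity, together with the observation that $a^k=b^{-k}$ forces this common element to have order dividing $\gcd(m,n)=1$, is the standard proof of this classical exercise. The paper itself cites this lemma as a well-known result without supplying a proof, so there is nothing to compare against; your reasoning is exactly the expected one.
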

	
	\begin{orphlemma}\label{orphlema2}
		Let $x$ be an element of finite order $n$ in an arbitrary group, and $k$ an arbitrary integer. Then $x^k$ has order $n/\gcd(n,k)$.
	\end{orphlemma}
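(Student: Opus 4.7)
Set $d = \gcd(n,k)$ and write $k = dk'$, $n = dn'$, so that $\gcd(n',k') = 1$. My plan is to show that $n' = n/d$ is exactly the order of $x^k$ by a two-sided divisibility argument, which is the standard way to pin down the order of an element.

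First I would check that $(x^k)^{n'} = 1$. Indeed, $(x^k)^{n'} = x^{k n'} = x^{dk' n'} = x^{k' n} = (x^n)^{k'} = 1$, using that $x$ has order $n$. Hence the order of $x^k$ divides $n'$.

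For the reverse divisibility, suppose $m$ is any positive integer with $(x^k)^m = 1$, so that $x^{km} = 1$. Since $x$ has order exactly $n$, this forces $n \mid km$, that is, $dn' \mid dk' m$, which simplifies to $n' \mid k' m$. Because $\gcd(n', k') = 1$, I get $n' \mid m$, so in particular $m \geq n'$. Combining with the previous paragraph shows that the order of $x^k$ is precisely $n' = n/\gcd(n,k)$.

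There is no real obstacle here: the only point one must be careful about is the passage from $n' \mid k'm$ to $n' \mid m$, which uses the coprimality of $n'$ and $k'$ (equivalently, Euclid's lemma). The argument works verbatim in any group, with no commutativity assumption, and makes sense for every integer $k$ once one notes that $\gcd(n,k) = \gcd(n,|k|)$, so one may replace $k$ by $|k|$ at the outset if desired.
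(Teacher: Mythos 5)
Your proof is correct: the two-sided divisibility argument (showing $(x^k)^{n'}=1$ and then that any $m$ with $(x^k)^m=1$ satisfies $n'\mid m$ via Euclid's lemma) is the standard and complete way to establish this, and your handling of the edge cases $k=0$ and $k<0$ is fine. Note that the paper itself states this lemma without proof, quoting it as a well-known classical exercise, so there is no proof in the paper to compare against; your argument is exactly what one would expect to supply.
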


	A moment's reflection should convince the reader that no formula for $\ot(ab)$ can be given solely in terms of $m$ and $n$, in the general case. Of course, some divisibilities involving $m, n$ and $\ot(ab)$ can still be obtained with little effort, such as
	\begin{equation}\label{weakbound}
		\frac{\lcm(m, n)}{\gcd(m, n)} \mid \ot(ab) \mid \lcm(m, n).
	\end{equation}
	
	For a detailed analysis of what can be said about $\ot(ab)$ when the value $e := |\langle a \rangle \cap \langle b \rangle|$ is also known, an excellent reference is D. Jungnickel's paper \cite{Jungnickel1}. We present here three theorems from this source (using slightly modified notation), which demonstrate the complicated arithmetic involved in this problem:
	
	\begin{orphtheorem} \label{JungA} \cite[Theorem 1]{Jungnickel1}
		Let $a$ and $b$ be two elements in a finite commutative group $G$ with orders $m$ and $n$, respectively. Denote the subgroups of $G$ generated by $a$ and $b$ by $A$ and $B$, respectively, and assume that $A\cap B$ has order $e$ (where $e$ divides $\gcd(m,n)$). Let $D$ be the largest divisor of $e$ that is coprime to $m/\gcd(m,n)$ and $n/\gcd(m,n)$. Then the order of the product $ab$ satisfies
		\begin{equation*}
			\frac{{\lcm(m,n)}}{D}\mid \ot(ab) \mid \frac{{\lcm(m,n)}}{\varepsilon },
		\end{equation*}
		where $\varepsilon =1$ if $D$ is odd and $\varepsilon =2$ otherwise.
	\end{orphtheorem}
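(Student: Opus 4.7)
The plan is to reduce the problem to an explicit arithmetic computation inside $A \cap B$. Write $g = \gcd(m,n)$ and $l = \lcm(m,n) = mn/g$. Since the cyclic group $A = \langle a\rangle$ has a unique subgroup of each order dividing $m$, and $A \cap B$ has order $e$, one has $A \cap B = \langle a^{m/e}\rangle$, and symmetrically $A \cap B = \langle b^{n/e}\rangle$. There is therefore an integer $t$ with $\gcd(t,e)=1$ and
\begin{equation*}
b^{n/e} = a^{tm/e}.
\end{equation*}

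Since $G$ is abelian, $(ab)^k = 1$ is equivalent to $a^k = b^{-k}$, which forces both $a^k$ and $b^k$ to lie in $A \cap B$. The first condition gives $(m/e) \mid k$ and the second $(n/e) \mid k$, so together $(l/e) \mid k$. Writing $k = j \cdot l/e$ and using $l/m = n/g$ together with $l/n = m/g$, I would compute
\begin{equation*}
a^k b^k = (a^{m/e})^{j n/g} \cdot (b^{n/e})^{j m/g} = (a^{m/e})^{j(n/g + t m/g)}.
\end{equation*}
Setting $\alpha := n/g + t m/g$, this equals $1$ precisely when $e \mid j\alpha$, and the smallest positive such $j$ is $e/\gcd(e,\alpha)$. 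This produces the clean formula
\begin{equation*}
\ot(ab) = \frac{l}{\gcd(e,\alpha)}.
\end{equation*}

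It then remains to sandwich $\gcd(e,\alpha)$ between $\varepsilon$ and $D$. For the divisibility $l/D \mid \ot(ab)$, equivalently $\gcd(e,\alpha) \mid D$: if a prime $p$ divides both $\gcd(e,\alpha)$ and $m/g$, then $p \mid t(m/g)$, so $p \mid \alpha$ forces $p \mid n/g$, contradicting $\gcd(m/g, n/g) = 1$; the symmetric argument rules out $p \mid n/g$. Hence every prime power dividing $\gcd(e,\alpha)$ is a divisor of $e$ coprime to both $m/g$ and $n/g$, and by maximality of $D$ one concludes $\gcd(e,\alpha) \mid D$. For $\ot(ab) \mid l/\varepsilon$, equivalently $\varepsilon \mid \gcd(e,\alpha)$, only the case $\varepsilon = 2$ requires work: there $D$ is even, forcing $2 \mid e$ with both $m/g$ and $n/g$ odd, and since $\gcd(t,e)=1$ also $t$ is odd, so $\alpha$ is a sum of two odd integers and $2 \mid \alpha$.

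The main obstacle is obtaining the closed-form identity $\ot(ab) = l/\gcd(e,\alpha)$: this requires the right parameterization of $A \cap B$ via the integer $t$, which is precisely the piece of data beyond $m$, $n$, and $e$ that pins down $\ot(ab)$. Once this identity is in hand, the two required divisibilities become elementary number-theoretic checks; the only genuinely delicate point is the parity argument yielding the factor $\varepsilon = 2$, which relies crucially on $\gcd(t,e) = 1$.
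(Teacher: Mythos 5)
Your proof is correct and follows essentially the same route as the paper: the paper does not reprove Theorem~\ref{JungA} directly but establishes the generalization Theorem~\ref{JungAGen} (which reduces to it in the abelian case, where $\ot(ab)=\ot(a,b)$) by exactly this argument — reduce to the cyclic group $A\cap B$, parameterize its two generators $a^{m/e}$ and $b^{n/e}$ by a unit (your $t$ is the paper's pair $(u,v)$ with $u=1$, $v=t$), derive $\ot(ab)=\lcm(m,n)/\gcd(e,\alpha)$, and then sandwich $\gcd(e,\alpha)$ between $\varepsilon$ and $D$ by the same prime-by-prime and parity checks.
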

	
	\begin{orphtheorem} \label{JungB} \cite[Theorem 2]{Jungnickel1} 
		Let $m, n$ and $e$ be arbitrary positive integers for which $e$ divides both $m$ and $n$. Then there exists a finite abelian group $G$ with cyclic subgroups $A$ and $B$ of orders $m$ and $n$, respectively, $A\cap B$ has order $e$, and there exist generators $a, a'$ of $A$ and $b, b'$ of $B$ that satisfy
		\begin{equation*}
			\ot(ab)=\frac{{\lcm(m,n)}}{D}\quad {\rm and}\quad 
			\ot(a'b')=\frac{{\lcm(m,n)}}{\varepsilon },
		\end{equation*} 
		where $D$ and $\varepsilon $ are defined in Theorem \ref{JungA}.
	\end{orphtheorem}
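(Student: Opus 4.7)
The theorem is a realization result, so I proceed by building one explicit group containing $A, B$ of the right shape and then hitting the two endpoints by choosing different generators. Take
\[
G := (\mathbb{Z}/m\mathbb{Z}\oplus\mathbb{Z}/n\mathbb{Z}) \big/ K, \qquad K := \bigl\langle (m/e,\,-n/e)\bigr\rangle,
\]
and let $a, b \in G$ be the images of $(1, 0)$ and $(0, 1)$. Standard bookkeeping shows $|K| = e$, $A := \langle a\rangle$ and $B := \langle b\rangle$ are cyclic of orders $m$ and $n$, and $A \cap B$ is cyclic of order $e$, generated by $c := a^{m/e} = b^{n/e}$ (the equality holding in $G$ by construction of $K$).

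\textbf{An order formula.} Setting $g := \gcd(m, n)$ and writing $m = g m_1$, $n = g n_1$ with $\gcd(m_1, n_1) = 1$, I would establish that for every $j$ coprime to $n$,
\[
\ot(ab^j) \;=\; \frac{\lcm(m,n)}{\gcd(e,\, n_1 + j m_1)}.
\]
The derivation is direct: $(ab^j)^k = 1$ forces $a^k \in A \cap B$, so $k$ is a multiple of $m/e$, say $k = (m/e) s$; writing the identity $a^k = b^{-jk}$ in the cyclic quotient $A \cap B \cong \mathbb{Z}/e$ reduces to the single congruence $s(n_1 + j m_1) \equiv 0 \pmod{e}$, and taking the minimum positive solution yields the displayed formula.

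\textbf{Hitting both endpoints.} One then checks that as $j$ varies over residues coprime to $n$, the quantity $\gcd(e, n_1 + j m_1)$ ranges over divisors of $D$, attaining both $D$ and $\varepsilon$. For the maximum $D$: any prime dividing such a gcd must be coprime to both $m_1$ and $n_1$ (else $\gcd(m_1, n_1) > 1$ or $p \mid j$, contradicting $\gcd(j, n) = 1$), so the gcd always divides $D$; conversely $j \equiv -n_1 m_1^{-1} \pmod{D}$, extendable to a $j$ coprime to $n$ by CRT since $D$ is coprime to $m_1 n_1$, saturates it. For the minimum $\varepsilon$: at each odd prime $p \mid e$ pick $j \not\equiv -n_1 m_1^{-1} \pmod p$ to kill the $p$-contribution; at $p = 2$ with $2 \mid D$, both $m_1$ and $n_1$ are odd and $j$ is forced odd (since $2 \mid e \mid n$), so $n_1 + j m_1$ is automatically even, but one can arrange $v_2(n_1 + j m_1) = 1$ by a suitable choice of $j \bmod 4$. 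Taking $b := b^{j_1}$ for the first choice and $(a', b') := (a, b^{j_2})$ for the second produces the two generator pairs required by the theorem.

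\textbf{Main obstacle.} The delicate step is the $2$-adic parity analysis, which simultaneously shows that the factor $\varepsilon = 2$ cannot be avoided when $2 \mid D$ (because admissibility forces $j$ odd) and that $\varepsilon = 2$ is exactly achievable (via a mod-$4$ adjustment of $j$). At odd primes the dichotomy is clean—either full cancellation or none—while at $p = 2$ the unique involution in the $2$-part of $A \cap B$ imposes an unremovable factor of $2$.
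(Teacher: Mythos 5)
The paper does not actually prove Theorem \ref{JungB}: it is imported verbatim from Jungnickel's article, so your argument has to be judged as a self-contained proof rather than against an in-paper one. Read that way, it is correct and complete in outline. The quotient $G=(\mathbb{Z}/m\mathbb{Z}\oplus\mathbb{Z}/n\mathbb{Z})/\langle(m/e,-n/e)\rangle$ does produce cyclic $A,B$ of orders $m,n$ with $|A\cap B|=e$ and $a^{m/e}=b^{n/e}$, and your formula $\ot(ab^j)=\lcm(m,n)/\gcd(e,\,n_1+jm_1)$ for $\gcd(j,n)=1$ is exactly the specialization of the formula $\ot(a,b)=\lcm(m,n)/\gcd(e,\,vm'+un')$ derived in the paper's proof of Theorem \ref{JungAGen} (take $g=a^{m/e}=b^{n/e}$, so $u=1$ and $v=j$). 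Your realization argument is thus the converse analysis of that proof: as $v$ ranges over the units mod $e$, the quantity $\gcd(e,vm'+un')$ attains both extremes $D$ and $\varepsilon$. The two CRT selections of $j$, and the parity analysis at $p=2$ (the factor $2$ is forced when $2\mid D$ because admissible $j$ are odd, and is trimmed to exactly $2$ by fixing $j$ mod $4$), are all sound; since $\gcd(j,n)=1$, each $b^j$ is indeed a generator of $B$, as the statement requires.

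One small correction to the intermediate step: with $k=(m/e)s$, the condition $a^kb^{jk}=1$ unwinds to $n_1e\mid s(n_1+jm_1)$, not to $s(n_1+jm_1)\equiv 0\pmod{e}$ as you wrote. The extra factor $n_1\mid s$ — which follows because $\gcd(n_1,\,n_1+jm_1)=\gcd(n_1,jm_1)=1$ — is precisely what upgrades the minimal $k$ from $m/\gcd(e,n_1+jm_1)$ to $\lcm(m,n)/\gcd(e,n_1+jm_1)$. Your displayed final formula is the correct one, so this is a slip in the justification rather than in the result, but as written the congruence mod $e$ would yield the wrong order.
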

	
	\begin{orphtheorem} \label{JungC} \cite[Theorem 3]{Jungnickel1}
		Let $m$ and $n$ be arbitrary positive integers, and let $k$ be any positive integer satisfying
		\begin{equation*}
			\frac{{\lcm(m,n)}}{f}\mid k\mid {\lcm(m,n)},
		\end{equation*}
		where $f$ is the largest divisor of $\gcd(m,n)$ which is co-prime to both $m/\gcd(m,n)$ and $n/\gcd(m,n)$.
		Then there exists a finite abelian group $G$ and elements $a, b$ of $G$ with orders $m$ and $n$, respectively, such that $\ot(ab)=k$.
	\end{orphtheorem}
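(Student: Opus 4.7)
The plan is to reduce to prime powers via a direct-product decomposition, then construct explicit pairs of elements inside small abelian $p$-groups.

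First, I write $m = \prod_p p^{\alpha_p}$, $n = \prod_p p^{\beta_p}$, $k = \prod_p p^{\gamma_p}$ and set $\mu_p = \max(\alpha_p, \beta_p)$. Examining the definition of $f$, a prime $p$ divides $f$ if and only if $p$ is coprime to both $m/\gcd(m,n)$ and $n/\gcd(m,n)$, i.e.\ if and only if $\alpha_p = \beta_p$; in that case the $p$-part of $f$ is $p^{\alpha_p}$. Translating the divisibility $\lcm(m,n)/f \mid k \mid \lcm(m,n)$ prime by prime therefore says that $\gamma_p \le \mu_p$ always, with equality forced when $\alpha_p \ne \beta_p$, and no further constraint when $\alpha_p = \beta_p = \mu_p$.

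Next I construct, for each prime $p$, an abelian $p$-group $G_p$ with elements $a_p, b_p \in G_p$ of orders $p^{\alpha_p}, p^{\beta_p}$ whose product has order $p^{\gamma_p}$. When $\alpha_p < \beta_p$, take $G_p = \bb Z/p^{\mu_p}\bb Z$ additively, $b_p = 1$, $a_p = p^{\mu_p - \alpha_p}$; then $a_p + b_p = 1 + p^{\mu_p - \alpha_p}$ is a unit, so it has order $p^{\mu_p} = p^{\gamma_p}$. When $\alpha_p = \beta_p = \mu_p$, take $G_p = (\bb Z/p^{\mu_p}\bb Z)^{2}$: for $0 \le \gamma_p < \mu_p$ set $a_p = (1, 0)$ and $b_p = (-1 + p^{\mu_p - \gamma_p}, 0)$, noting that $b_p$ is a unit in the first factor while $a_p + b_p = (p^{\mu_p - \gamma_p}, 0)$ has order exactly $p^{\gamma_p}$; for $\gamma_p = \mu_p$ set $a_p = (1, 0)$ and $b_p = (0, 1)$. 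The case $p = 2$ with $\gamma_p = \mu_p$ is the only one genuinely forcing the non-cyclic setting, because in $\bb Z/2^{\mu_p}\bb Z$ the sum of two units is always even.

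Finally, I assemble $G = \prod_p G_p$, a finite abelian group since only finitely many primes contribute, and set $a = (a_p)_p$, $b = (b_p)_p$. Orders multiply across coordinates, so $\ot(a) = m$, $\ot(b) = n$, and $\ot(ab) = \prod_p p^{\gamma_p} = k$. The main obstacle, though modest, is the careful prime-by-prime translation of the hypothesis on $k$: one needs to verify that the condition ``$\gamma_p$ free when $\alpha_p = \beta_p$, $\gamma_p$ forced when they differ'' matches exactly what the local constructions can realize; once that alignment is established, the verification of orders is elementary.
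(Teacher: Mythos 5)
The paper does not prove this statement---it is quoted verbatim from Jungnickel's Theorem~3---so there is no internal proof to compare against; your argument has to stand on its own, and it does. The prime-by-prime translation of the hypothesis is correct: $p\mid f$ exactly when $\alpha_p=\beta_p$, with $p$-part $p^{\alpha_p}$, so the condition on $k$ is precisely ``$\gamma_p=\mu_p$ forced when $\alpha_p\neq\beta_p$, and $0\le\gamma_p\le\mu_p$ unconstrained when $\alpha_p=\beta_p$.'' Each local construction realizes the required order: in the unequal case $1+p^{\mu_p-\alpha_p}$ is a unit because $\mu_p-\alpha_p\ge 1$; in the equal case with $\gamma_p<\mu_p$ the element $-1+p^{\mu_p-\gamma_p}$ is a unit for the same reason and the sum $(p^{\mu_p-\gamma_p},0)$ has order exactly $p^{\gamma_p}$; and $(1,0)+(0,1)$ handles $\gamma_p=\mu_p$, which is indeed the one subcase (at $p=2$) that cannot be realized cyclically. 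Since the component orders are powers of distinct primes, orders multiply across the direct product, giving $\mathrm o(a)=m$, $\mathrm o(b)=n$, $\mathrm o(ab)=k$. The only omission is cosmetic: you treat $\alpha_p<\beta_p$ but not $\alpha_p>\beta_p$; you should state that the latter is handled by the same construction with the roles of $a_p$ and $b_p$ exchanged. This direct-product, one-prime-at-a-time construction is the natural (and essentially Jungnickel's own) route to the realization theorem.
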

	
	We also mention that an algorithm for determining the integer $D$ defined in Theorem \ref{JungA} can be found in L\"uneburg \cite[Ch. IV]{Luneburg}. 
	
	Despite its reduced scope, Lemma \ref{orphlema1} contributes to the proof of numerous foundational results in group theory and number theory. For instance, Lemma \ref{orphlema1} can be used to show that a finite group is cyclic if and only if its exponent and its order are equal. In turn, one may use this result to show that a finite subgroup of the multiplicative group of a field must be cyclic (see for instance Jacobson \cite[Theorems 1.4 and 2.18]{Jacobson} or van der Waerden \cite[paragraphs 42 and 43]{Waerden}). Lemma \ref{orphlema1} is also the basis of the famous algorithm due to Gauss that allows one to determine primitive elements in a finite field (that is generators for the cyclic multiplicative group) and then primitive polynomials (see for instance Jungnickel \cite[paragraph 2.5]{Jungnickel3}).
	
	In the general case of arbitrary groups, our hopes to find information on the order of $ab$ in terms of $m,n$ and possibly other information on the structure of $G$ are lowered by the following elegant result in Milne \cite[Theorem 1.64]{Milne}, showing that in this respect, essentially anything could happen.
	
	\begin{orphtheorem}\label{sepoateorice}
		For any integers $m,n,r>1$, there exists a finite group $G$ with elements $a$ and $b$ such that $a$ has order $m$, $b$ has order $n$, and $ab$ has order $r$.
	\end{orphtheorem}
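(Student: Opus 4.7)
The plan is to realize $G$ as a quotient of the free product $F = \mathbb{Z}/m \ast \mathbb{Z}/n$, with generators $x, y$ of orders $m$ and $n$. The natural candidate is the von Dyck triangle group
\[
\Delta(m,n,r) = \langle x, y \mid x^m = y^n = (xy)^r = 1 \rangle,
\]
obtained from $F$ by killing $(xy)^r$. Two things must be checked: that in $\Delta(m,n,r)$ the generators $x, y$ keep their orders and $xy$ has order exactly $r$, and, if $\Delta(m,n,r)$ is infinite, that one can pass to a finite quotient without damaging these orders.

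For the first point I would construct a geometric realization of $\Delta(m,n,r)$ as a discrete group of orientation-preserving isometries of a model surface of constant curvature. Setting $\kappa = 1/m + 1/n + 1/r - 1$, the cases $\kappa > 0$, $\kappa = 0$, $\kappa < 0$ call respectively for the sphere, the Euclidean plane, and the hyperbolic plane. On the relevant surface pick a geodesic triangle with angles $\pi/m, \pi/n, \pi/r$ (this exists precisely because of the sign of $\kappa$), and let $x, y$ act as rotations by $2\pi/m$ and $2\pi/n$ about two of its vertices; a standard trigonometric computation then forces $xy$ to be the rotation by $2\pi/r$ about the third vertex. This yields a homomorphism from $\Delta(m,n,r)$ into an isometry group under which the images of $x, y, xy$ have exact orders $m, n, r$, so the same must hold in $\Delta(m,n,r)$ itself.

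If $\Delta(m,n,r)$ is already finite (the spherical case: binary polyhedral or dihedral groups), we take $G = \Delta(m,n,r)$. In the Euclidean and hyperbolic cases $\Delta(m,n,r)$ embeds as a lattice in a connected Lie group, hence is finitely generated and linear over a number field, and so residually finite by Mal'cev's theorem. Applying residual finiteness to the finite set of nontrivial elements $\{x^i : 1 \le i < m\} \cup \{y^j : 1 \le j < n\} \cup \{(xy)^k : 1 \le k < r\}$ produces a normal subgroup $N$ of finite index in $\Delta(m,n,r)$ that avoids every one of them; the quotient $G = \Delta(m,n,r)/N$ is finite, and the images of $x, y, xy$ still have orders exactly $m, n, r$.

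The substantive obstacle is the construction of the geometric representation, namely the existence of a geodesic triangle with angles $\pi/m, \pi/n, \pi/r$ in each curvature regime and the verification that the corresponding rotations actually satisfy the three defining relations of $\Delta(m,n,r)$. Purely algebraic manipulation of the presentation does not preclude the orders of $x, y, xy$ from collapsing once the relation $(xy)^r = 1$ is imposed on $F$, which is why the detour through spherical, Euclidean, or hyperbolic trigonometry is needed.
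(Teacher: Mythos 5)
Your argument is correct, and it is a genuinely different proof from the one the paper leans on. The paper cites Milne, whose proof is an explicit matrix construction: in $\mathrm{SL}_2(\mathbb{F}_q)$ one can prescribe the traces of $a$, $b$ and $ab$ independently, and for a suitable prime power $q$ the trace of a semisimple element controls its order, so a concrete finite quotient of $\mathrm{SL}_2(\mathbb{F}_q)$ is produced in one step. You instead route through the von Dyck group $\Delta(m,n,r)$: the representation by rotations about two vertices of a geodesic triangle with angles $\pi/m,\pi/n,\pi/r$ (on the sphere, the Euclidean plane, or the hyperbolic plane according to the sign of $1/m+1/n+1/r-1$) certifies that $x$, $y$, $xy$ have exact orders $m$, $n$, $r$ in $\Delta(m,n,r)$, since their images do and the defining relations already force the orders to divide $m$, $n$, $r$; then Mal'cev's residual finiteness of finitely generated linear groups lets you descend to a finite quotient in which the finitely many relevant nontrivial powers survive. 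Both steps are standard and correctly assembled; the only blemish is the aside calling the finite spherical von Dyck groups ``binary'' polyhedral --- realized as rotation groups of $S^2$ they are the ordinary dihedral and polyhedral groups (the binary ones are their double covers in $\mathrm{SU}(2)$), and the sign of the rotation angle of $xy$ about the third vertex is really $-2\pi/r$, which does not affect its order. What Milne's route buys is explicitness (a concrete $G$ of computable order); what yours buys is conceptual transparency, at the cost of a non-effective appeal to residual finiteness. It is also worth noting, against the paper's remark that all known proofs involve non-solvable groups, that your construction produces solvable examples for the three Euclidean triples $(3,3,3)$, $(2,4,4)$, $(2,3,6)$, since the Euclidean von Dyck groups are virtually abelian; for generic $(m,n,r)$, however, the hyperbolic case still forces non-solvable quotients, so the paper's remark stands for the theorem as a whole.
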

	
	Its proof gives an explicit construction in quotients of $\mathrm{SL}(2, \bb F_q)$. This apparently random behavior can also be seen in a conjecture of Stefan Kohl \cite[problem 18.49]{Mazurov} recently proved independently by J. K\"onig \cite{Konig} and J. Pan \cite{Pan} to the effect that for any $x,y,z\in\mathbb{N}$ with $1<x,y,z\le n-2$ there exist $a, b\in S_{n}$ such that $a$ has order $x$, $b$ has order $y$, and $ab$ has order $z$. It should be noted that all proofs of Theorem \ref{sepoateorice} that the author has seen involve non-solvable groups.
	
	In the case of nilpotent groups, a remarkable fact still holds true: the product of two elements of finite order has finite order. This is in fact a corollary of a much more general result due to A. I. Mal'cev: 
	
	\begin{orphtheorem}\emph{(A. I. Mal'cev, \cite[2.23]{Clement})} 
		Let $G$ be a finitely-generated nilpotent group, containing a subgroup $H \le G$. If $G$ admits a finite generating set $X$ with the property that each element of $X$ has a power inside $H$, then any element of $G$ has a power inside $H$. If this is the case, then $H$ must have finite index in $G$.
	\end{orphtheorem}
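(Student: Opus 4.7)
The plan is to derive both conclusions from a single \emph{key lemma}: if $G$ is a finitely-generated nilpotent group with generating set $X = \{x_1, \ldots, x_k\}$ and $n_1, \ldots, n_k$ are positive integers, then
\begin{equation*}
    K := \langle x_1^{n_1}, x_2^{n_2}, \ldots, x_k^{n_k} \rangle
\end{equation*}
has finite index in $G$. Granting the lemma, choose $n_x > 0$ with $x^{n_x} \in H$ for each $x \in X$; the lemma yields $[G:K] < \infty$, and since $K \subseteq H$, also $[G:H] < \infty$, giving the finite-index conclusion. The ``power in $H$'' conclusion then follows by a standard coset-counting argument: the $[G:H]+1$ cosets $H, gH, g^2H, \ldots, g^{[G:H]}H$ cannot all be distinct, so $g^{j-i} \in H$ for some $0 \leq i < j \leq [G:H]$.

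I would prove the key lemma by induction on the nilpotency class $\gamma$ of $G$. The base case $\gamma = 1$ is immediate: $G$ is a finitely-generated abelian group, and $G/K$ is generated by the images $\bar x_i$ satisfying $\bar x_i^{n_i} = 1$, so $G/K$ is a finitely-generated abelian torsion group, hence finite. For the inductive step, pass to the quotient $\bar G := G/\gamma_\gamma(G)$, which has class at most $\gamma - 1$ and is generated by the images of $X$. Applying the inductive hypothesis to $\bar G$ and $\bar K = \langle \bar x_i^{n_i}\rangle$ yields that $K\gamma_\gamma(G)$ has finite index in $G$. Since $\gamma_\gamma(G)$ is central, the second isomorphism theorem gives $K\gamma_\gamma(G)/K \cong \gamma_\gamma(G)/(K \cap \gamma_\gamma(G))$, so it suffices to show $K \cap \gamma_\gamma(G)$ has finite index in the finitely-generated abelian group $\gamma_\gamma(G)$.

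The key technical input here is the commutator identity: in a group of nilpotency class $\gamma$, for any $y_1, \ldots, y_\gamma \in G$ and integers $m_1, \ldots, m_\gamma$,
\begin{equation*}
    [y_1^{m_1}, y_2^{m_2}, \ldots, y_\gamma^{m_\gamma}] = [y_1, y_2, \ldots, y_\gamma]^{m_1 m_2 \cdots m_\gamma}.
\end{equation*}
This follows from iterated application of $[a^n, b] = \prod_{i=0}^{n-1}[a, b]^{a^i}$ and its analogue for the second slot, noting that every ``correction term'' in these expansions picks up extra commutator weight and thus vanishes modulo $\gamma_{\gamma+1}(G) = 1$. Now $\gamma_\gamma(G)$ is generated as an abelian group by the left-normed commutators $c_{i_1 \ldots i_\gamma} := [x_{i_1}, x_{i_2}, \ldots, x_{i_\gamma}]$ with $x_{i_j} \in X$, which is a standard consequence of the Jacobi identity together with centrality of $\gamma_\gamma(G)$. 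Applying the identity with $y_j = x_{i_j}$ and $m_j = n_{i_j}$ shows that $c_{i_1 \ldots i_\gamma}^{n_{i_1} \cdots n_{i_\gamma}}$ is a commutator of elements of $K$, hence lies in $\gamma_\gamma(K) \subseteq K$. Every generator of $\gamma_\gamma(G)$ thus has a positive power in $K \cap \gamma_\gamma(G)$, forcing $\gamma_\gamma(G)/(K \cap \gamma_\gamma(G))$ to be a finitely-generated abelian torsion group, hence finite.

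The main obstacle will be the careful execution of the commutator identity at arbitrary class $\gamma$: verifying it at $\gamma = 2$ or $3$ is routine, but the general case calls for a separate sub-induction on $\gamma$ (or a direct appeal to the Hall-Petresco collection formula) to rigorously track that every correction term ends up in $\gamma_{\gamma+1}(G) = 1$ after embedding into the outer commutator.
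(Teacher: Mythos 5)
The paper does not prove this statement: it is quoted from Clement--Majewicz--Zyman \cite[2.23]{Clement} and used purely as a black box (the case $H=1$ yields that products of torsion elements in a nilpotent group are torsion), so there is no in-paper argument to compare against. Your self-contained proof is correct and follows the standard route to Mal'cev's theorem. The reduction to the key lemma is clean, and the pigeonhole step recovering a positive power of an arbitrary $g$ inside $H$ from $[G:H]<\infty$ is fine. In the induction on the class, the two external inputs you invoke are exactly the right ones and are both standard: (a) if $G=\langle X\rangle$ has class $\gamma$, then $\gamma_\gamma(G)$ is central and is generated by the finitely many left-normed commutators $[x_{i_1},\dots,x_{i_\gamma}]$ with entries in $X$ (P. Hall's lemma that $\gamma_c(G)=\langle [x_{i_1},\dots,x_{i_c}] : x_{i_j}\in X\rangle\,\gamma_{c+1}(G)$, applied at $c=\gamma$ where $\gamma_{\gamma+1}(G)=1$); and (b) the weight-$\gamma$ left-normed commutator induces a multilinear map $(G/\gamma_2(G))^{\gamma}\to\gamma_\gamma(G)$, which is the precise form of your identity $[y_1^{m_1},\dots,y_\gamma^{m_\gamma}]=[y_1,\dots,y_\gamma]^{m_1\cdots m_\gamma}$ and follows by iterating the bilinearity of the commutator pairing $\gamma_i(G)/\gamma_{i+1}(G)\times G/\gamma_2(G)\to\gamma_{i+1}(G)/\gamma_{i+2}(G)$. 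You correctly flag (b) as the one point needing a sub-induction; with it supplied, every generator of the finitely generated abelian group $\gamma_\gamma(G)$ becomes torsion modulo $K\cap\gamma_\gamma(G)$, that quotient is finite, and the second isomorphism theorem (legitimate here since $\gamma_\gamma(G)$ is central, so $K\gamma_\gamma(G)$ is a group with $K$ normal in it) closes the induction. I see no gap.
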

	
	Indeed, by plugging $H = 1$ into the above, we learn that any nilpotent group $G$ which admits a finite generating set of torsion elements is finite. So if $a, b$ are elements of finite order in a nilpotent group $G$, then $ab \in \langle a, b \rangle$ must also have finite order. Note also that the case $H = 1$ of this theorem gives an affirmative answer to Burnside's problem in the special case of nilpotent groups. This was in fact previously noticed by R. Baer, in his paper \cite{Baer}. Another very short proof of the fact that the torsion part of a nilpotent group is a subgroup can be found in a post from Math StackExchange \cite{Magidin}.
	
	One may now naturally ask whether a bound such as (\ref{weakbound}) exists in the case of nilpotent groups. From the sources cited above, it seems that the best result one can obtain with the same methods is
	\begin{equation*}
		\ot(ab) \mid \lcm(m, n)^{\gamma},
	\end{equation*}
	where $\gamma$ is the nilpotency class of $G$. So if the class $\gamma$ is fixed, this gives a polynomial bound on $\ot(ab)$, in terms of $\lcm(m, n)$. In this paper, we show that this can actually be sharpened to a linear bound! In particular, we will extend a result of P. Hall \cite[4.28, 4.13]{Hall1} which, when modified to fit our paradigm, implies that $\ot(ab) \mid \lcm(m, n)$ for regular $p$-groups.
	
	Before stating some of our main results, we will fix some notations. For two elements $a$, $b$ of a group, we will always denote by $[a, b]$ their commutator $a^{-1}b^{-1}ab$, following the convention used in Gorenstein \cite{Gorenstein} and P. Hall \cite{Hall1}. In particular, we will often use the following trivial manipulations in our proofs:
	\begin{equation*}
		ba = ab \cdot [b, a], \qquad b^{-1}ab = a \cdot [a, b].
	\end{equation*}
	As usual, for a group $G$ we will denote by $Z(G)$ its center, by $G' = [G, G]$ its commutator subgroup, and by $G^{\rm ab} = G/[G, G]$ its abelianization. Also, $C_{G}(g)$ will denote the centralizer of an element $g$ of $G$. More elaborate notations and coventions will appear in section \ref{SNilp}, where we make use of Hall's complex commutators. Also, to prevent any potential errors, we assume all groups involved to be finite, unless stated otherwise.
	
	The theorem \ref{JungA} of D. Jungnickel on $\ot(ab)$ in finite abelian groups will appear as a consequence of a more general result, which holds in arbitrary finite groups. To state it, we recall that in \cite[4.28]{Hall1}, P. Hall proved that in regular $p$-groups the order of the product of two elements $a,b$ coincides with the least positive integer $N$ such that $a^{N}b^{N}=1$. This suggests the use of the following definition.
		
	\begin{definition}\label{defo(a,b)}
		For any two elements of finite order $a, b$ in an arbitrary group $G$, we denote by $\ot(a, b)$ the least positive integer $N$ satisfying $a^Nb^N = 1$, and call it {\it the mutual order of $a,b$}. Since the set $\{N \in \mathbb Z : a^Nb^N = 1\}$ is a nontrivial subgroup of $\mathbb Z$, the value $\ot(a, b)$ is a generator of this subgroup. In other words, the following equivalence holds:
		\begin{equation*}
			a^Nb^N = 1 \iff \ot(a, b) \mid N.
		\end{equation*}
	\end{definition}
	
	It is quite obvious from the definition that $\ot(a, b) = \ot(b, a)$ and $\ot(x, 1) = \ot(x)$. In the case that $a$ and $b$ commute, we also $\ot(a, b) = \ot(ab)$ have. Thus, this new notion generalizes the usual notion of order of an element. Another interesting observation is that $\ot(a^{-1}, b^{-1}ab)$ is the least power of $a$ that commutes with $b$.

	Our main goal in this paper is to explore the relationship between $\ot(ab)$ and $\ot(a, b)$ in the way more difficult case that $a$ and $b$ do not commute. The results that we will prove in Section \ref{SNilp} for nilpotent groups of arbitrary class rely on a remarkable formula of P. Hall (\cite[3.1, (3.21)]{Hall1}) expressing the powers of a product of two elements in terms of the powers of these elements and of their higher commutators. Our main result in this respect essentially says that in finite nilpotent groups, the order of the product of two elements is the same as their mutual order, modulo a factor that can be reasonably controlled:
	
	\begin{theorem}\label{constantacontrolabila}
		Let $G$ be a finite nilpotent group of class $\gamma $. There exists a finite set of positive rational numbers $S = S(\gamma)$, depending solely on the nilpotency class $\gamma$, and whose elements in reduced form contain no primes exceeding $\gamma$ in their numerators and denominators, such that for all $a, b$ in $G$, there exists $s \in S$ depending on $a$ and $b$ such that
		\begin{equation*}
			\ot(ab) = \ot(a, b) \cdot s.
		\end{equation*}
	\end{theorem}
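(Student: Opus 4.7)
The plan combines three ingredients: a Sylow decomposition reducing to $p$-groups, P. Hall's theorem on regular $p$-groups to handle primes $p > \gamma$, and Hall's commutator collection formula together with induction on the nilpotency class to handle primes $p \leq \gamma$.

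For the reduction, write $G = \prod_p G_p$ as the direct product of its Sylow subgroups. If $a = (a_p)_p$ and $b = (b_p)_p$, then $(ab)^N = 1$ iff $(a_p b_p)^N = 1$ for every $p$, giving $\ot(ab) = \lcm_p \ot(a_p b_p)$; the analogous argument applied to $a^N b^N$ gives $\ot(a, b) = \lcm_p \ot(a_p, b_p)$. Since each of these factors is a $p$-power,
\[
\frac{\ot(ab)}{\ot(a, b)} = \prod_p p^{e_p}, \qquad e_p \in \mathbb Z.
\]
For $p > \gamma$, the Sylow subgroup $G_p$ has class $\leq \gamma < p$ and is therefore regular; the theorem of P. Hall recalled just before Definition \ref{defo(a,b)} yields $\ot(a_p b_p) = \ot(a_p, b_p)$, i.e., $e_p = 0$. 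Only finitely many primes $p \leq \gamma$ can then contribute, matching the prime restriction in the statement of $S(\gamma)$.

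For each $p \leq \gamma$, the aim is to bound $|e_p|$ by a constant $B(\gamma)$ depending only on $\gamma$; then $S(\gamma)$ can be taken to be the finite set of all possible products $\prod_{p \leq \gamma} p^{e_p}$. The main tool is Hall's commutator collection formula: in any group of class $\gamma$,
\[
(ab)^n = a^n b^n \prod_i c_i^{f_i(n)},
\]
where the $c_i$ are basic commutators in $a, b$ of weight $w_i \in [2, \gamma]$ and the $f_i$ are integer-valued polynomials of degree $w_i$ in $n$. Substituting $n = \ot(a, b)$ annihilates $a^n b^n$, yielding $(ab)^{\ot(a,b)}$ as an explicit commutator product; substituting $n = \ot(ab)$ expresses $a^{\ot(ab)} b^{\ot(ab)}$ analogously. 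I then induct on $\gamma$: the base case $\gamma = 1$ is trivial (abelian), and in the inductive step I quotient by $Z := \gamma_\gamma(G) \subseteq Z(G)$, which has class $\gamma - 1$ and to which the inductive hypothesis applies.

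The main obstacle is controlling the additional factor arising in the passage from $G/Z$ back to $G$. Writing $\ot(ab) = \ot(\overline{ab}) \cdot r_1$, where $r_1$ is the order of the central element $(ab)^{\ot(\overline{ab})} \in Z$, and expressing $\ot(a, b)$ similarly in terms of $\ot(\bar a, \bar b)$ and the order of an analogous central element obtained from Hall's formula, the problem reduces to bounding the $p$-adic valuation of a ratio of orders in $Z$. Via Hall's formula, those central elements decompose into products of weight-$\gamma$ basic commutators raised to values of integer-valued polynomials of degree $\gamma$. The crucial input is that such polynomials, when expressed in the monomial basis, have denominators dividing $\gamma!$, so only primes $\leq \gamma$ can appear in the ratios with $p$-adic valuations bounded by the corresponding exponent of $p$ in $\gamma!$. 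Converting this into a uniform bound on $|e_p|$—independent of $G$, of $a$ and $b$, and even of their individual orders—is the technical heart of the proof.
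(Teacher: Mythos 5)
Your global skeleton is sound and partly overlaps with the paper's: the Sylow decomposition correctly reduces the problem to $p$-groups, and Hall's regularity theorem disposes of all primes $p>\gamma$. (Note, though, that the paper obtains the prime restriction directly from its explicit constants and then \emph{re-derives} Hall's regular-$p$-group result as Corollary \ref{corolarcugamma}; quoting that result as input would make the corollary circular within the paper's own architecture, even if it is legitimate for a free-standing proof.) The induction on the class via a central quotient and Hall's collection formula is also the paper's strategy for the remaining primes.

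The genuine gap is exactly where you locate "the technical heart," and it is not closed by the observation that the coefficients of the $f_i$ have denominators dividing $\gamma!$. After passing to the central quotient you must compare the orders of two \emph{different} central elements, namely $(ab)^{m'}$ with $m'=\ot(\overline{ab})$ and $a^{m}b^{m}$ with $m=\ot(\bar a,\bar b)$. Writing $(ab)^{m'}=a^{m'}b^{m'}\prod_i c_i^{f_i(m')}$, the error term $\prod_i c_i^{f_i(m')}$ involves basic commutators of every weight from $2$ to $\gamma$, not only weight $\gamma$; your claim that these central elements decompose, via Hall's formula, into products of weight-$\gamma$ commutators is not what the formula gives, since only the product, not the individual factors, is forced into the center. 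Two central elements of an abelian group can have orders differing by an arbitrarily large factor, so bounding your $|e_p|$ requires a uniform bound on the orders of the individual $c_i$ themselves, of the form $c_i^{\ot(a,b)\cdot B'}=1$ (resp.\ $c_i^{\ot(ab)\cdot C'}=1$) with $B',C'$ depending only on $\gamma$. This is precisely the content of the paper's Lemma \ref{lemcenter}: if $c_0^n,\dots,c_r^n\in Z(G)$ then $c_k^{n\cdot A(\gamma)}=1$ for all $k\ge 2$, proved by a nontrivial downward induction along the commutator ordering using the identity $c_ic_k=c_j^{-1}c_ic_j$ and a second application of the collection formula to $(c_ic_k)^N$. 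Nothing in your sketch supplies this (or an equivalent) ingredient, so the inductive step does not close as written.
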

	
	In particular, we obtain as a corollary the following famous result of P. Hall:
	
	\begin{corollary}\label{corolarcugamma}\cite[4.28, 4.13]{Hall1}
		If $G$ is a finite nilpotent group of class smaller than any prime dividing the order of $G$, then
		\begin{equation*}
			\ot(ab) = \ot(a, b)
		\end{equation*}
		for any two elements $a, b \in G$. 
	\end{corollary}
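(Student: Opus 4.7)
The strategy is to deduce the corollary directly from Theorem \ref{constantacontrolabila}, by exploiting the tension between the prime factors allowed to appear in the ratio $s = \ot(ab)/\ot(a, b)$ and the prime factors dividing $|G|$. The plan, at a high level, is: extract $s$ from the theorem, write it in lowest terms, and show that every prime appearing must simultaneously be bounded above by $\gamma$ (by the theorem) and strictly bounded below by $\gamma$ (by the hypothesis on $|G|$), leaving no room for any prime at all.

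First I would apply Theorem \ref{constantacontrolabila} to the given elements $a, b \in G$, obtaining a positive rational $s \in S(\gamma)$ such that $\ot(ab) = \ot(a, b) \cdot s$. Writing $s = p/q$ in lowest terms, the theorem guarantees that every prime divisor of $p$ or of $q$ is at most $\gamma$.

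Next I would verify that both $\ot(ab)$ and $\ot(a, b)$ divide $|G|$. For $\ot(ab)$ this is immediate from Lagrange. For $\ot(a, b)$, set $m = \ot(a)$ and $n = \ot(b)$; since $a^{\lcm(m, n)} = b^{\lcm(m, n)} = 1$, the defining equivalence in Definition \ref{defo(a,b)} yields $\ot(a, b) \mid \lcm(m, n)$, and $\lcm(m, n)$ in turn divides $|G|$.

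Finally, from $q \cdot \ot(ab) = p \cdot \ot(a, b)$ together with $\gcd(p, q) = 1$, one extracts $p \mid \ot(ab)$ and $q \mid \ot(a, b)$. Any prime dividing $p$ or $q$ then divides $|G|$ and must, by hypothesis, be strictly greater than $\gamma$. Combined with the bound $\leq \gamma$ from Theorem \ref{constantacontrolabila}, this leaves no admissible prime, so $p = q = 1$, whence $s = 1$ and $\ot(ab) = \ot(a, b)$. There is no real obstacle: the substantive content is entirely absorbed into Theorem \ref{constantacontrolabila}, and the corollary is a short divisibility argument that collapses $S(\gamma)$ effectively to $\{1\}$ under the given hypothesis on the prime divisors of $|G|$.
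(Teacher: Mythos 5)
Your proof is correct and follows essentially the same route as the paper: apply Theorem \ref{constantacontrolabila}, write $s$ in lowest terms, deduce that the numerator divides $\ot(ab)$ and the denominator divides $\ot(a,b)$ (hence both divide $|G|$ via $\ot(a,b)\mid\lcm(\ot(a),\ot(b))$), and conclude from the clash between the bound $\le\gamma$ on their prime factors and the hypothesis that all primes dividing $|G|$ exceed $\gamma$. No issues.
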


	Combined with Jungnickel's Theorem \ref{JungA}, we obtain the promised bounds for $\ot(ab)$, linear in $\lcm(m, n)$. For nilpotent groups of class 2 we will be able to prove in Section \ref{SClass2} more effective results, since in this case $\ot(ab)/\ot(a,b)\in\{\frac{1}{2},1,2\}$. Our main result in this respect is:
	
	\begin{theorem}\label{IntroducereClass2}
		Let $a, b$ be elements of finite order in a nilpotent group of class 2, and let $r$ be the order of $c = [b, a] := b^{-1}a^{-1}ba$. Then $r$ divides both $\ot(ab)$ and $\ot(a, b)$, and one has the formula
		\begin{equation}
			\ot(ab) = \ot(a, b) \cdot \frac{\ot(a^rb^rc^{r \choose 2})}{\ot(a^rb^r)},
		\end{equation}
		where the factor $\frac{\ot(a^rb^rc^{r \choose 2})}{\ot(a^rb^r)}$ lies in the set $\{\frac 12, 1, 2\}$.
	\end{theorem}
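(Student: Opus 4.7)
\emph{Proof proposal.} The plan is to exploit the two class-2 identities
\[
(xy)^n = x^n y^n [y,x]^{\binom{n}{2}}, \qquad [x^m, y^n] = [x,y]^{mn},
\]
which hold whenever $[x,y]$ is central, and then to reduce the theorem to a clean statement about a product of two commuting elements, one of which is central of order dividing $2$.

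First I would establish that $r$ divides both $\ot(a,b)$ and $\ot(ab)$. If $a^N b^N = 1$, then $a^N = b^{-N}$ commutes with $b$, so $[a,b]^N = [a^N,b] = 1$, which forces $r \mid N$. Similarly, if $(ab)^M = 1$, then $1 = [(ab)^M,a] = [ab,a]^M = [b,a]^M = c^M$, giving $r \mid M$.

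Next, using $(ab)^n = a^n b^n c^{\binom{n}{2}}$ together with $[b^r, a^r] = c^{r^2} = 1$ (so that $a^r$ and $b^r$ commute), I would prove the two order identities
\[
\ot(a^r b^r) = \ot(a,b)/r \qquad \text{and} \qquad \ot\bigl(a^r b^r c^{\binom{r}{2}}\bigr) = \ot(ab)/r.
\]
The first follows at once from $(a^r b^r)^t = a^{rt} b^{rt}$; the second from the identity $(ab)^{rt} = (a^r b^r c^{\binom{r}{2}})^t$, which I would verify via the arithmetic identity $\binom{rt}{2} - t\binom{r}{2} = r^2\binom{t}{2}$ combined with $c^{r^2} = 1$. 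Dividing these two formulas, the factors of $r$ cancel and I recover the main identity of the theorem.

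The concluding step---showing the ratio lies in $\{1/2, 1, 2\}$---is the main obstacle, being a purely arithmetic case analysis rather than a commutator manipulation. Setting $u = a^r b^r$ and $v = c^{\binom{r}{2}}$, I would first note that $v^2 = c^{r(r-1)} = 1$ and that $v$ is central. If $r$ is odd then $v = 1$ and the ratio is $1$. Otherwise $\ot(v) = 2$, and I would split into two sub-cases: when $v \in \langle u \rangle$, one has $v = u^{\ot(u)/2}$, and computing the order of $u \cdot u^{\ot(u)/2} = u^{1 + \ot(u)/2}$ via $\gcd(\ot(u), 1 + \ot(u)/2)$ yields a ratio in $\{1/2, 1\}$ depending on the $2$-adic valuation of $\ot(u)$; when $v \notin \langle u \rangle$, the abelian group $\langle u, v\rangle$ splits as $\langle u\rangle \times \langle v\rangle$, so $\ot(uv) = \lcm(\ot(u), 2)$, which gives a ratio in $\{1, 2\}$.
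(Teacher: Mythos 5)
Your proposal is correct, and for the divisibility $r\mid \ot(ab),\ot(a,b)$ and for the main identity it follows essentially the same route as the paper: both arguments reduce everything to $r$-th powers via the class-2 Hall formula $(xy)^n=x^ny^n[y,x]^{\binom n2}$ and the bilinearity $[x^i,y^j]=[x,y]^{ij}$, and both use $[b^r,a^r]=c^{r^2}=1$ to identify $\ot(a^r,b^r)$ with $\ot(a^rb^r)$. (Minor stylistic differences: you get $r\mid\ot(ab)$ from $[(ab)^M,a]=c^M$ rather than by reading off from Hall's formula that $a^{\ot(ab)}$ commutes with $b$; and you verify $(ab)^{rt}=(a^rb^rc^{\binom r2})^t$ by the identity $\binom{rt}2-t\binom r2=r^2\binom t2$, where the paper simply takes the $r$-th power once and applies the order-of-a-power lemma. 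Both are fine.)

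The one genuine divergence is in the final step, showing $\ot(uv)/\ot(u)\in\{\tfrac12,1,2\}$ for $u=a^rb^r$, $v=c^{\binom r2}$. The paper invokes Jungnickel's Theorem \ref{JungA} for the commuting pair $(u,v)$ and reads the answer off from the constants $D$ and $\varepsilon$; you instead argue directly inside the abelian group $\langle u,v\rangle$, splitting on whether $v\in\langle u\rangle$ (where $v=u^{\ot(u)/2}$ and $\gcd(\ot(u),1+\ot(u)/2)\in\{1,2\}$ settles it) or $v\notin\langle u\rangle$ (where $\langle u,v\rangle=\langle u\rangle\times\langle v\rangle$ gives $\ot(uv)=\lcm(\ot(u),2)$). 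Your version is more elementary and self-contained, and in fact it makes the case distinctions of Corollaries \ref{CoroCl2.1}--\ref{CoroCl2.3} transparent without unpacking the definitions of $D$ and $\varepsilon$; what the paper's route buys is consistency with the general framework of Section \ref{Formulaot(a,b)}, where Theorem \ref{JungAGen} is already available. Both arguments are complete and correct.
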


	A detailed analysis of the three possible cases above will end Section \ref{SClass2}, and will give an insight on how difficult it might be to search for exact formulas for $\ot(ab)$ in finite groups of higher nilpotency class. Such formulas might be in principle possible to obtain, but at the cost of a way more difficult analysis, requiring an increasing number of parameters.
	
	\section{Properties of the mutual order $\mathrm{o}(a,b)$ in finite groups}\label{Formulaot(a,b)} 

	This section is concerned with proving some elementary facts about the mutual order $\ot(a, b)$, as well as generalizing Jungnickel's Theorem \ref{JungA} to arbitrary finite groups. We advise the experienced reader to skim through this section, as most proofs contained here are quite straight-forward. We start with a simple fact, which generalizes Lemma \ref{orphlema2} to mutual orders:

	\begin{proposition}\label{lema2}
		Let $a, b$ be elements of finite orders in an arbitrary group. Then
		\begin{equation*}
			\ot(a^n, b^n) = \frac{\ot(a, b)}{\gcd(\ot(a, b), n)},
		\end{equation*}
		for all integers $n$.
	\end{proposition}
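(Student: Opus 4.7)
The plan is to reduce the statement directly to the defining equivalence
$$a^Mb^M = 1 \iff \ot(a,b) \mid M$$
from Definition \ref{defo(a,b)}, and then to a trivial number-theoretic fact about the smallest multiple of $N/\gcd(N,n)$ by $n$.

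First I would set $N := \ot(a, b)$, and unwind the definition of $\ot(a^n, b^n)$: by Definition \ref{defo(a,b)}, it is the least positive integer $M$ satisfying $(a^n)^M(b^n)^M = 1$, which rewrites as $a^{nM}b^{nM} = 1$. By the equivalence recalled above, this last equation is equivalent to $N \mid nM$. Hence $\ot(a^n, b^n)$ equals the least positive integer $M$ for which $N$ divides $nM$.

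To compute this minimum, I would factor out $d := \gcd(N, n)$, writing $N = dN'$ and $n = dn'$ with $\gcd(N', n') = 1$. Then $N \mid nM$ becomes $N' \mid n'M$, and by coprimality this is the same as $N' \mid M$. The smallest positive $M$ satisfying this is $M = N' = N/d = \ot(a,b)/\gcd(\ot(a,b), n)$, which is the desired formula.

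There is no real obstacle here: the argument is just the definition of mutual order plus the elementary observation that the smallest $M > 0$ with $N \mid nM$ is $N/\gcd(N,n)$. The only thing worth stressing in the write-up is that the equivalence in Definition \ref{defo(a,b)} is applied to the pair $(a,b)$, not to $(a^n,b^n)$, so that the problem immediately collapses to a divisibility question among integers.
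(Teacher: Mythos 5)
Your proof is correct and follows essentially the same route as the paper's: both reduce the condition $(a^n)^M(b^n)^M=1$ to the divisibility $\ot(a,b)\mid nM$ and then conclude by the coprimality of $\ot(a,b)/\gcd(\ot(a,b),n)$ and $n/\gcd(\ot(a,b),n)$. You merely package the two divisibility directions of the paper's argument into a single computation of the least admissible $M$.
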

	
	\begin{proof}
		First of all, note that $n \cdot \frac{\ot(a, b)}{\gcd(\ot(a, b), n)}$ is a multiple of $\ot(a, b)$, so
		\begin{equation*}
			1 = a^{n \cdot \frac{\ot(a, b)}{\gcd(\ot(a, b), n)}}b^{n \cdot \frac{\ot(a, b)}{\gcd(\ot(a, b), n)}} = (a^n)^{\frac{\ot(a, b)}{\gcd(\ot(a, b), n)}}(b^n)^{\frac{\ot(a, b)}{\gcd(\ot(a, b), n)}}.
		\end{equation*}
		This means that $\ot(a^n, b^n) \mid \frac{\ot(a, b)}{\gcd(\ot(a, b), n)}$. For the converse divisibility, we observe that the equality $1 = a^{n \cdot \ot(a^n, b^n)}b^{n \cdot \ot(a^n, b^n)}$ implies $\ot(a, b) \mid n \cdot \ot(a^n, b^n)$, from which we further deduce that
		\begin{equation*}
			\frac{\ot(a, b)}{\gcd(\ot(a, b), n)} \mathrel{\Big|} \frac{n}{\gcd(\ot(a, b), n)} \cdot \ot(a^n, b^n).
		\end{equation*}
		The conclusion comes now from the fact that $\frac{\ot(a, b)}{\gcd(\ot(a, b), n)}$ and $\frac{n}{\gcd(\ot(a, b), n)}$ are coprime.
	\end{proof}
	
	We also state separately the particular case that $s$ is a divisor of $\ot(a,b)$. This will be useful in many cases where we know a divisor $d$ of $\ot(a, b)$, and we try to understand $\ot(a, b)$ in terms of the potentially simpler quantity $\ot(a^d, b^d)$:
	
	\begin{corollary}\label{reduce}
		Let $a, b$ be elements of finite orders in an arbitrary group, and let $s$ be a divisor of $\ot(a, b)$. Then $\ot(a, b) = s \cdot \ot(a^s, b^s)$.
	\end{corollary}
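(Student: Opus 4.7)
The plan is to derive this corollary as a direct specialization of Proposition \ref{lema2}. Since $s$ is assumed to be a divisor of $\ot(a,b)$, we have $\gcd(\ot(a,b), s) = s$. First I would substitute $n = s$ into the formula of Proposition \ref{lema2} to obtain
\begin{equation*}
    \ot(a^s, b^s) = \frac{\ot(a,b)}{\gcd(\ot(a,b), s)} = \frac{\ot(a,b)}{s}.
\end{equation*}
Multiplying both sides by $s$ then yields the desired identity $\ot(a,b) = s \cdot \ot(a^s, b^s)$.

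There is no real obstacle here; the whole content is the observation that Proposition \ref{lema2} simplifies dramatically when $n$ divides $\ot(a,b)$, so the proof is essentially a one-line invocation. If one preferred a self-contained argument avoiding an explicit appeal to the proposition, one could instead verify the two divisibilities directly: the relation $(a^s)^{\ot(a,b)/s}(b^s)^{\ot(a,b)/s} = a^{\ot(a,b)}b^{\ot(a,b)} = 1$ shows $\ot(a^s,b^s) \mid \ot(a,b)/s$, while $1 = (a^s)^{\ot(a^s,b^s)}(b^s)^{\ot(a^s,b^s)}$ gives $\ot(a,b) \mid s\cdot \ot(a^s,b^s)$, i.e.\ $\ot(a,b)/s \mid \ot(a^s,b^s)$.
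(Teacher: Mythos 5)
Your proof is correct and matches the paper's approach: the corollary is stated there precisely as the special case of Proposition \ref{lema2} in which $n = s$ divides $\ot(a,b)$, so that $\gcd(\ot(a,b),s) = s$. The self-contained verification you add is a fine bonus but not needed.
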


	We are now ready to state and prove the generalization of Theorem \ref{JungA} to arbitrary finite groups. The only difference is that $\ot(ab)$ is replaced with $\ot(a, b)$:
	
	\begin{theorem}\label{JungAGen}
		Let $a$ and $b$ be two elements in a finite group $G$ with orders $m$ and $n$, respectively. Denote the subgroups of $G$ generated by $a$ and $b$ by $A$ and $B$, respectively, and assume that $A \cap B$ has order $e$ (where $e$ divides $\gcd(m,n)$). Let $D$ be the largest divisor of $e$ that is coprime to $m/\gcd(m,n)$ and $n/\gcd(m,n)$. Then the mutual order $\ot(a, b)$ satisfies
		\begin{equation*}
			\frac{{\lcm(m,n)}}{D}\mid \ot(a, b) \mid \frac{{\lcm(m,n)}}{\varepsilon },
		\end{equation*}
		where $\varepsilon = 1$ if $D$ is odd and $\varepsilon = 2$ otherwise.
	\end{theorem}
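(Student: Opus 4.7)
The plan is to analyze $a^N = b^{-N}$, where $N := \ot(a, b)$, as an element of $A \cap B$, and to compare its order in two ways. The order of $a^N$ equals $m/\gcd(m, N)$ but also, as $b^{-N}$, equals $n/\gcd(n, N)$; denote this common value by $t$. Since $a^N \in A \cap B$, we have $t \mid e$.

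For the lower bound, I would show $t \mid D$ by a prime-by-prime analysis. Fixing a prime $p$ and writing $\alpha = v_p(m)$, $\beta = v_p(n)$, $\nu = v_p(N)$, the identity $\max(0, \alpha - \nu) = v_p(t) = \max(0, \beta - \nu)$ forces $\alpha = \beta$ whenever $v_p(t) > 0$. In that case neither $m/\gcd(m, n)$ nor $n/\gcd(m, n)$ is divisible by $p$, so $t$ is coprime to both and therefore divides $D$. From $m/t \mid N$ and $n/t \mid N$ we then obtain $\lcm(m, n)/t \mid N$, and $t \mid D$ yields $\lcm(m, n)/D \mid N$.

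The upper bound when $\varepsilon = 1$ is immediate from $a^L b^L = 1$ with $L := \lcm(m, n)$. When $\varepsilon = 2$, the valuation argument above applied at $p = 2$ (using $2 \mid D$) gives $v_2(m) = v_2(n)$, so $L/m$ and $L/n$ are both odd, whence $a^{L/2} = a^{m/2}$ and $b^{L/2} = b^{n/2}$. The plan is then to identify both as the unique element of order $2$ in $A \cap B$, which is cyclic of even order $e$ (as a subgroup of the cyclic group $A$, with $2 \mid D \mid e$): being the unique involutions of $A$ and $B$ respectively, they must coincide with the unique involution of $A \cap B$. Their product is then the square of an involution, hence $1$, giving $\ot(a, b) \mid L/2$. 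The main obstacle is this final step, which hinges on the observation that evenness of $D$ forces equal $2$-parts of $m$ and $n$ together with the uniqueness of involutions in cyclic groups; the rest is elementary $p$-adic bookkeeping.
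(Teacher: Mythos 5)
Your argument is correct, and it takes a genuinely different route from the paper's. The paper first reduces to the cyclic group $A\cap B$ via Corollary \ref{reduce} with $s=\lcm(m,n)/e$, writes $a^{m/e}=g^u$ and $b^{n/e}=g^v$ for a generator $g$ of $A\cap B$, and derives the exact formula $\ot(a,b)=\lcm(m,n)/\gcd\!\big(e,\tfrac{vm+un}{\gcd(m,n)}\big)$; both divisibilities then follow from $\varepsilon\mid\gcd(e,vm'+un')\mid D$, the left-hand side by a parity count on $u,v,m',n'$. You instead work directly with $N=\ot(a,b)$: computing the order $t$ of $a^{N}=b^{-N}$ in two ways, the $p$-adic bookkeeping correctly yields $t\mid D$ together with $\lcm(m,n)/t\mid N$, which gives the lower bound without ever producing a closed form for $\ot(a,b)$; and for the upper bound with $\varepsilon=2$ you replace the parity count by the observation that $2\mid D$ forces $v_2(m)=v_2(n)$, so that $a^{L/2}$ and $b^{L/2}$ are the unique involutions of $A$ and $B$, both lying in the cyclic subgroup $A\cap B$ of even order and hence equal, whence their product is trivial. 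The trade-off is that the paper's computation also delivers the exact value of $\ot(a,b)$ (exploited in the remark following the theorem to build a canonical symmetric expression), whereas your proof obtains just the two divisibilities, but from more elementary structural facts about cyclic groups; both arguments ultimately rest on the same two inputs, namely $a^{N}=b^{-N}\in A\cap B$ and the arithmetic of $m'$, $n'$ and $e$.
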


	\begin{proof}
		We start by deducing a slightly more explicit expression for $\ot(a, b)$. First of all, let us point out that both $\frac me$ and $\frac ne$ divide $\ot(a, b)$. Indeed, note that we have a chain of implications
		\begin{equation*}
			a^Nb^N = 1 \Longrightarrow a^N = b^{-N} \in \langle a \rangle \cap \langle b \rangle \Longrightarrow a^{eN} = b^{-eN} = 1.
		\end{equation*}
		So by the the definition of the order, we further obtain
		\begin{equation*}
			m = \ot(a) \mid eN \Longrightarrow \frac me \mid N, \text{\; and\; } n = \ot(b) \mid eN \Longrightarrow \frac ne \mid N.
		\end{equation*}
		Also, let us note that $a^{\frac me}$ and $b^{\frac ne}$ both generate the intersection subgroup $\langle a \rangle \cap \langle b \rangle$. We now apply Corollary \ref{reduce}, with $s = \frac{\lcm(m, n)}{e}$:
		\begin{equation*}
			\ot(a, b) = \frac {\lcm(m, n)}e \cdot \ot\!\Big(a^\frac {\lcm(m, n)}e, b^{\frac {\lcm(m, n)}e}\Big) = \frac {\lcm(m, n)}e \cdot \ot\!\Big(a^\frac {\lcm(m, n)}eb^{\frac {\lcm(m, n)}e}\Big).
		\end{equation*}
		The last step comes from the fact that $a^{\frac {\lcm(m, n)}e}, b^{\frac {\lcm(m, n)}e}$ are both elements in the abelian group $\langle a \rangle \cap \langle b \rangle$. Now if $g$ denotes any generator of $\langle a \rangle \cap \langle b \rangle$, the other two generators $a^{\frac me}, b^{\frac ne}$ can be written as $g^u, g^v$ respectively, with $u, v$ coprime to $e = |\langle a \rangle \cap \langle b \rangle|$.\footnote{Note that $g$ may be taken to be one of $a^{\frac me}$ and $b^{\frac ne}$, in which case one of $u$ and $v$ becomes 1. However, wishing to keep everything symmetric, we employ this more general notation.} With this notation, $\ot(a, b)$ can be further expressed as
		\begin{equation*}
			\ot(a, b) = \frac{\lcm(m, n)}e\cdot \ot\!\Big(g^{\frac{vm + un}{\gcd(m, n)}}\Big) \stackrel{\ref{lema2}}= \frac{\lcm(m, n)}{\gcd(e, \frac{vm + un}{\gcd(m, n)})}.
		\end{equation*}
		For brevity, we denote $m' := \frac m{\gcd(m, n)}$ and $n' := \frac n{\gcd(m, n)}$. It now remains to show the following divisibility:
		\begin{equation*}
			\varepsilon \mid \gcd(e, vm' + un') \mid D.
		\end{equation*}
		We start with the left side, since it is a bit easier to see. Note that the only non-trivial content of this divisibility is when $D$ is even, and $\varepsilon = 2$. In this case, $e$ is also even, so we must see that $um' + vn'$ is even as well. The numbers $u$ and $v$ are odd because they are both coprime to $e$, while $m', n'$ are odd because they are both coprime to $D$. So indeed $vm' + un'$ is even.
		
		Now for the second divisibility, let $d$ be a common divisor of $e$ and $vm' + un'$, and let us see why $d$ must in fact divide $D$. If by absurd $d$ has any prime factor $p$ in common with $m'$ it would follow that
		\begin{equation*}
			 p \mid (vm' + un') - vm' = un'.
		\end{equation*}
		But $m', n'$ are coprime by definition, whereas $u$ and $e$ are coprime since $\langle g^u\rangle = \langle g \rangle$. So this is indeed a contradiction, as $p$ can divide neither of $u$ and $n'$. So $d$ is a divisor of $e$, coprime with both of $m'$ and $n'$, i.e. $d \mid D$ as wished. This concludes our proof.
	\end{proof}

	\begin{remark}
		The $u$ and $v$ in the proof above obviously depend on the chosen generator $g$ of $\langle a \rangle \cap \langle b \rangle$. However, there is a way to replace the choice of the pair $(u, v)$ with something else, that is both symmetric and canonical. Indeed, note that if we change $g$ to another generator $g'$ satisfying $g = (g')^k$, then the new pair consists of $u' \equiv uk \; (\text{mod } e)$ and $v' \equiv vk \; (\text{mod } e)$. So we can construct a set similar to the projective space, given by the orbits of the diagonal action of $(\bb Z/e\bb Z)^\times$ on $\bb Z/e\bb Z \times \bb Z/e\bb Z$:
		\begin{equation*}
			\bb P_e := \frac{\{(a, b) \in \bb Z/e\bb Z \times \bb Z/e\bb Z\}}{(ka, kb) \sim (a, b), \forall k \in (\bb Z/e\bb Z)^\times}.
		\end{equation*}
		It is now easy to see that the equivalence class of $(u, v)$ in $\bb P_e$ is well-defined, no matter what $g$ is. Also, note that for any point $\pi \in \bb P_e$ represented by a pair $(a, b)$, there is a well-defined evaluation map on $\bb Z \times \bb Z$, given by
		\begin{equation*}
			\pi(x, y) \longmapsto \gcd(e, bx + ay),
		\end{equation*}
		since units modulo $e$ do not affect the gcd. So if $\pi$ is the equivalence class of $(u, v)$ from before, we get a canonical, symmetric ``formula''
		\begin{equation*}
			\ot(a, b) = \frac{\lcm(m, n)}{\pi(m', n')}.
		\end{equation*}
		Of course, if one is satisfied with an asymmetric formula, then a canonical choice can be made simply by choosing the pair $(u, v)$ such that $u = 1$.
	\end{remark}

	We end this section with a few corollaries, most of which are concerned with when the value $\ot(a, b)$ is uniquely determined. It is easy to see (using also Theorem \ref{JungB} of Jungnickel) that this happens precisely when the equality $\varepsilon = D$ occurs, i.e. when $D \in \{1, 2\}$. Since we want to make no mention of $e$ in the statements, we wish to find all pairs $(m, n)$ such that all choices of $e \mid \gcd(m, n)$ lead to $D \in \{1, 2\}$. It is not hard to show that this happens if and only the $D$ associated to $e = \gcd(m, n)$ is in $\{1, 2\}$. Thus, we have
	
	\begin{corollary}
		Given positive integers $m, n$ such that the biggest divisor $D$ of $\gcd(m, n)$ coprime to each of $m'$ and $n'$ is 1 or 2, we have
		\begin{equation*}
			\ot(a, b) = \frac{\lcm(m, n)}{D},
		\end{equation*}
		for all possible choices of elements $a, b$ of orders $m, n$ respectively.
	\end{corollary}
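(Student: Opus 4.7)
The plan is to reduce the statement to a direct application of Theorem \ref{JungAGen}. Given any pair $(a, b) \in G \times G$ with $\ot(a) = m$ and $\ot(b) = n$, I would set $e := |\langle a \rangle \cap \langle b \rangle|$ (which divides $\gcd(m, n)$), let $D_e$ denote the largest divisor of this particular $e$ coprime to $m'$ and $n'$, and let $\varepsilon_e \in \{1, 2\}$ be the parity indicator attached to $D_e$ as in Theorem \ref{JungAGen}. The theorem then supplies the sandwich
\begin{equation*}
    \frac{\lcm(m, n)}{D_e} \mathrel{\Big|} \ot(a, b) \mathrel{\Big|} \frac{\lcm(m, n)}{\varepsilon_e}.
\end{equation*}

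The first key observation is that $D_e$ must divide $D$: any divisor of $e \mid \gcd(m, n)$ that is coprime to $m'$ and $n'$ is, by the very definition of $D$ as the largest such divisor of $\gcd(m, n)$, a divisor of $D$. Hence the hypothesis $D \in \{1, 2\}$ propagates to $D_e \in \{1, 2\}$, which in turn forces $\varepsilon_e = D_e$. The sandwich therefore collapses to the equality $\ot(a, b) = \lcm(m, n)/D_e$, and the value of $\ot(a, b)$ is completely pinned down once $D_e$ is.

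What remains, and what I expect to be the one delicate point of the proof, is to upgrade this to $\ot(a, b) = \lcm(m, n)/D$; this is the content of the ``not hard to show'' equivalence flagged in the paragraph preceding the corollary. The case $D = 1$ is immediate, since $D_e \mid D = 1$ forces $D_e = 1$. The case $D = 2$ requires verifying that $D_e$ cannot drop to $1$, equivalently that $2 \mid e$ whenever $2 \mid \gcd(m, n)$ with both $m'$ and $n'$ odd; I plan to handle this by a short $2$-adic bookkeeping on the Sylow $2$-subgroups of $\langle a \rangle$ and $\langle b \rangle$, using that under these parity constraints the $2$-parts of $\langle a \rangle$ and $\langle b \rangle$ are cyclic groups of the same order inside $\langle a, b\rangle$, forcing their intersection to contain a nontrivial $2$-element. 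Once this last step is cleared, the desired formula $\ot(a, b) = \lcm(m, n)/D$ follows uniformly for every admissible pair $(a, b)$.
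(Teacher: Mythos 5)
Your reduction to Theorem \ref{JungAGen} is the right opening move, and it matches the paper's own (implicit) argument: with $D_e$ and $\varepsilon_e$ the quantities attached to the actual intersection order $e = |\langle a\rangle \cap \langle b\rangle|$, one indeed has $D_e \mid D$, hence $D_e \in \{1,2\}$, hence $\varepsilon_e = D_e$ and $\ot(a,b) = \lcm(m,n)/D_e$. The gap is exactly where you flagged it, and your proposed repair fails: two cyclic $2$-subgroups of the same order inside a common overgroup need \emph{not} intersect nontrivially. The Klein four group already defeats the claim that $2 \mid e$ whenever $2 \mid \gcd(m,n)$ with $m', n'$ odd: take $a = (1,0)$, $b = (0,1)$ in $\bb Z/2 \times \bb Z/2$, so $m = n = 2$, $m' = n' = 1$, $D = 2$, yet $e = 1$ and $\ot(a,b) = 2 = \lcm(m,n)$ rather than $\lcm(m,n)/2 = 1$.

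Moreover, this is not a repairable defect of your argument but a counterexample to the statement itself when $D = 2$: in that case $\ot(a,b)$ is not determined by $m$ and $n$ alone, since $e$ may be odd (giving $D_e = 1$ and $\ot(a,b) = \lcm(m,n)$) or even (giving $D_e = 2$ and $\ot(a,b) = \lcm(m,n)/2$), and both possibilities occur already in abelian groups (compare $a \neq b$ with $a = b$ for two elements of order $2$). What your first two paragraphs do correctly establish is the weaker conclusion that $\ot(a,b) \in \{\lcm(m,n),\, \lcm(m,n)/D\}$; the corollary as stated is valid only in the case $D = 1$, which is essentially the content of Corollary \ref{coro2}. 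For what it is worth, the paper gives no written proof beyond the paragraph preceding the corollary, and that paragraph makes the same slide from ``for each fixed $e$ the value is uniquely determined'' to ``the value is independent of $e$''; the obstruction you ran into is in the source, not in your reading of it.
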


	In particular, we get the following easy to remember corollaries, whose proofs may potentially be obtained through easier methods as well:
	
	\begin{corollary} \label{coro2}
		If $m, n$ are positive integers such that $v_p(m) \neq v_p(n)$ for all $p \mid \gcd(m, n)$, then 
		\begin{equation*}
			\ot(a, b) = \lcm(m, n),
		\end{equation*}
		for all possible choices of elements $a, b$ of orders $m, n$ respectively.
	\end{corollary}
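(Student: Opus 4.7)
The plan is to deduce this directly from the preceding corollary, by showing that the hypothesis $v_p(m) \neq v_p(n)$ for every prime $p \mid \gcd(m,n)$ forces $D = 1$, where $D$ is the largest divisor of $\gcd(m,n)$ coprime to both $m' := m/\gcd(m,n)$ and $n' := n/\gcd(m,n)$. Once $D = 1$ is established, the previous corollary immediately gives $\ot(a,b) = \lcm(m,n)/1 = \lcm(m,n)$, as desired.

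First I would argue by contradiction: suppose $D > 1$ and pick any prime $p \mid D$. Then in particular $p \mid \gcd(m,n)$, so by hypothesis $v_p(m) \neq v_p(n)$; without loss of generality assume $v_p(m) < v_p(n)$. Next I would compute the $p$-adic valuation of $n'$, namely
\begin{equation*}
v_p(n') = v_p(n) - v_p(\gcd(m,n)) = v_p(n) - \min(v_p(m), v_p(n)) = v_p(n) - v_p(m) > 0.
\end{equation*}
This shows $p \mid n'$, contradicting the fact that $D$ is coprime to $n'$. Hence no such prime exists, $D = 1$, and the result follows.

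There is no real obstacle here; the only thing to be slightly careful about is making sure the case analysis $v_p(m) < v_p(n)$ versus $v_p(m) > v_p(n)$ is treated symmetrically (the same argument with $m$ and $n$ swapped rules out the reverse inequality via coprimality of $D$ with $m'$), which is automatic.
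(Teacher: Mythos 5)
Your proof is correct and follows exactly the route the paper intends: the corollary is stated as a specialization of the preceding one, and your valuation argument showing that any prime divisor of $D$ would have to divide $m'$ or $n'$ (hence $D=1$) is the right and complete justification.
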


	\begin{corollary}
		If $0 \le \alpha < \beta$ are integers, then
		\begin{equation*}
			\ot(a, b) = \ot(b) = p^\beta,
		\end{equation*}
		for all possible choices of elements $a, b$ of orders $p^\alpha, p^\beta$ respectively.
	\end{corollary}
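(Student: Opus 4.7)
The plan is to observe that this statement is an immediate specialization of Corollary \ref{coro2}, together with the trivial computation of the least common multiple. Concretely, set $m = p^\alpha$ and $n = p^\beta$, and note that $\gcd(m,n) = p^\alpha$, so the only prime dividing $\gcd(m,n)$ is $p$ (and if $\alpha = 0$, the hypothesis of Corollary \ref{coro2} is vacuous). Since $v_p(m) = \alpha \ne \beta = v_p(n)$, the hypothesis of Corollary \ref{coro2} is satisfied, so $\ot(a,b) = \lcm(m,n) = p^\beta = \ot(b)$.

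If one wishes to avoid the chain of references and derive the result directly from Theorem \ref{JungAGen}, the key step is the computation of the parameter $D$. Any $e$ dividing $\gcd(m,n) = p^\alpha$ is itself a power of $p$, while $n/\gcd(m,n) = p^{\beta - \alpha}$ is a positive power of $p$ since $\alpha < \beta$. Hence the largest divisor $D$ of $e$ that is coprime to $n/\gcd(m,n)$ must be $1$, which forces $\varepsilon = 1$ as well. Plugging this into the two-sided divisibility of Theorem \ref{JungAGen} squeezes $\ot(a,b)$ exactly to $\lcm(m,n) = p^\beta$.

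There is no real obstacle in either approach; the only point worth checking is the edge case $\alpha = 0$, where $a = 1$ and the statement degenerates to $\ot(1,b) = \ot(b) = p^\beta$, which also follows directly from the definition of mutual order. Thus the proof reduces to a one-line invocation of Corollary \ref{coro2}.
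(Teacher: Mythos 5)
Your proposal is correct and matches the paper's intent: the paper states this result as an immediate specialization of the preceding corollaries (in particular Corollary \ref{coro2}, whose hypothesis holds since $v_p(p^\alpha)=\alpha\neq\beta=v_p(p^\beta)$), which is exactly what you do. Your direct computation of $D=1$ from Theorem \ref{JungAGen} and your handling of the edge case $\alpha=0$ are fine but not needed beyond the one-line invocation.
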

	
	\section{Analysis of the ratio $\mathrm{o}(ab)/\mathrm{o}(a, b)$ in the nilpotent case}\label{SNilp}
	
	Given any two elements $a,b$ of an arbitrary group $G$ we may write $(ab)^n=a^nb^n\cdot d_n(a,b)$ with $d_n(a,b)$ an element in the derived subgroup $G'$. In order to study the relationship between $\ot(ab)$ and $\ot(a,b)$ it is therefore useful to find information on the elements $d_n$. It is easy to check that $d_n$ satisfies the recurrence relation $d_n(a,b)=([b^{n-1},a^{n-1}]\cdot d_{n-1}(b,a))^b$. Unrolling this recurrence relation is easily seen to give an expression of $d_n$ as a product of $n-1$ conjugates of $[x^i,y^i]^{\pm 1}$. It is in general desirable to find the shortest possible expression of $d_n$, or of an arbitrary element of $G'$ as a product of commutators, which is the so called commutator length problem. This, together with the stable version asking to describe the limit of the $\frac 1n$th of the commutator length of the $n$th power, are notoriously difficult problems with ramifications in low-dimensional manifolds, symplectic topology, dynamics, and in the theory of quasi-imorphisms and of bounded cohomology. For these topics we refer the interested reader to the fundamental work of Culler \cite{Culler}, Bavard \cite{Bavard} and Calegari \cite{Calegari}. In the case of nilpotent groups it is often useful to investigate $d_n$ by using the famous Hall polynomials and their properties. This will be our approach here, requiring the following notation.

	\begin{notation}\label{notatiaHall}
		Following the definitions in P. Hall \cite{Hall1}, we will present the notion of \emph{complex commutators} in the symbols $a$ and $b$. These are defined inductively as follows: 
		\begin{enumerate}
			\item The complex commutators of weight 1 are the symbols $a$ and $b$ themselves.
			\item Assuming the complex commutators of weights $1, \ldots, w - 1$ have already been defined, we define a complex commutator of weight $w$ to be any expression of the form $[S, T]$, where $S, T$ are complex commutators of lower weights $w_1, w_2$, satisfying $w_1 + w_2 = w$.
		\end{enumerate}
		The reason we use words such as ``symbol'' and ``expression'' is because we are viewing these complex commutators as formal operations in $a$ and $b$, rather than actual elements in $G$. That is, even if two complex commutators give the same value when applied to some concrete values $a, b$ in a group $G$, they may not be formally equal.\footnote{In the same way that polynomials in $\mathbb F_p[x]$ may be equal as functions, but not as formal polynomials.} In \cite[p. 43]{Hall1}, P. Hall does not explicitly state this in his definition of complex commutators, but then proceeds to say ``formally distinct complex commutators'' in any later result where this matters. Concrete examples of \emph{distinct} complex commutators include
		\begin{equation*}
			[a, b],\quad [a, a], \quad [b, b], \quad [[a, b], [b, a]].
		\end{equation*}
		The weight can be understood as the total number of symbols from the set $\{a, b\}$ that appear when writing out the commutators explicitly. For any complex commutator $c$, we will denote its weight by $w(c)$.
		
		Obviously, some of the complex commutators will always give the value 1 when evaluated at concrete values (for example $[a, a]$). We say that a complex commutator is degenerate if at some point in its construction a complex commutator of the form $[x, x]$ appears. For example, $[a, [[a, b], [a, b]]]$ is considered degenerate. An easy induction shows that indeed any degenerate complex commutator gives value 1 whenever evaluated at some concrete values $a, b$.\footnote{Note that there exist complex commutators that always take value 1 even if they are not degenerate: for instance $[[a, b], [b, a]]$ always gives value 1, as $[a, b] = [b, a]^{-1}$. While potentially better notions of degeneracy may be defined, the current definition is sufficient for our purposes.} 
		
		Next we need to introduce a total ordering relation on all complex commutators. Following the conventions from \cite{Hall1}, we order them in increasing order of their weights, and allow the ordering among commutators of equal weight to be arbitrary. That is, we may take
		\begin{equation} \label{ordine}
			c_0 = a, c_1 = b, c_2, c_3, \ldots, c_i, \ldots
		\end{equation}
		to be a sequence containing all complex commutators in $a, b$, such that $w(c_i) \le w(c_j)$ whenever $i \le j$. This is legal because for any given weight $w$, there exist only finitely many $c_i$ of weight at most $w$. For convenience we make the notation $w_i = w(c_i)$. Now we fix an ordering of the form (\ref{ordine}), which uniquely assigns an index to any complex commutator.
		
		In what follows, let $G$ be a finite nilpotent group of fixed class $\gamma$, generated by two elements $a, b$. Now that the ordering is fixed, we can finally view $c_i$ as actual elements of $G$, and not just as formal expressions. By the previous observation that there are only finitely many $c_i$ of a given weight $w$, there exists a greatest index $r$ such that $c_r$ has weight $\gamma$. Again since the ordering (\ref{ordine}) is fixed, this number $r$ depends only on $\gamma$. Because the nilpotency class is $\gamma$, the commutators $c_k$ with $k > r$ all vanish (\cite[2.53]{Hall1}), so effectively only $c_0, c_1, \ldots, c_r$ will be relevant.
	\end{notation}
	
	With this notation in mind, we have the following celebrated formula of P. Hall, expressing the powers of a product of two elements in terms of the powers of these elements and of their higher complex commutators.
	
	\begin{theorem}\cite[3.1, (3.21)]{Hall1}\label{hallform}
		For any integer $n$ one has the formula
		\begin{equation*}
			(ab)^n = a^nb^nc_2^{f_2(n)}c_3^{f_3(n)} \cdots c_r^{f_r(n)},
		\end{equation*}
		where $f_k$ $(2 \le k \le r)$ are polynomials that can be written as
		\begin{equation*}
			f_k(x) = \lambda_{k, 1}{x \choose 1} + \lambda_{k, 2}{x \choose 2} + \cdots + \lambda_{k, w_k}{x \choose w_k},
		\end{equation*}
		with integer constants $\lambda_{k, \ell}$ depending only on the subscripts $k$ and $\ell$.
	\end{theorem}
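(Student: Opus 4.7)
The central tool is Hall's \emph{collection process}, which rearranges $(ab)^n$ into a normal form. Starting from $(ab)^n = (ab)(ab)\cdots(ab)$, one iteratively moves all occurrences of $a$ to the left of each $b$ via the identity $ba = ab[b,a]$. Each such swap creates a commutator $[b,a]$, which then itself needs to be collected to the right of the remaining $a$'s using $[b,a]a = a[b,a][[b,a],a]$, creating higher-weight commutators, and so on recursively. Because $G$ has nilpotency class $\gamma$, every complex commutator of weight exceeding $\gamma$ evaluates to $1$, so the process terminates after finitely many steps and yields an expression
\[
(ab)^n = a^n b^n c_2^{e_2(n)} c_3^{e_3(n)} \cdots c_r^{e_r(n)}
\]
with integer exponents $e_k(n)$. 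This step establishes the \emph{shape} of the formula for each fixed $n$.

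Next, one must show that each exponent $e_k(n)$, viewed as a function of $n$, is a polynomial of degree at most $w_k$. The natural approach is induction on the weight $w_k$: from the identity $(ab)^{n+1} = (ab)^n \cdot ab$, a second pass of the collection process on the right-hand side yields a recurrence of the schematic form
\[
e_k(n+1) - e_k(n) = P_k\bigl(e_2(n), e_3(n), \ldots, e_{k-1}(n)\bigr),
\]
where $P_k$ is a polynomial expression in the earlier exponents, built from the combinatorics of how a single extra $a$ and $b$ must be commuted past the block $c_2^{e_2(n)}\cdots c_r^{e_r(n)}$. The essential bookkeeping point is that whenever a new commutator $c_k$ is produced during collection, its weight is the sum of the weights of the two commutators entering the bracket. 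By the induction hypothesis, each $e_j(n)$ with $w_j < w_k$ is a polynomial in $n$ of degree at most $w_j$; combining this with the weight-additivity, the forward difference $e_k(n+1)-e_k(n)$ is polynomial of degree at most $w_k-1$. Since $e_k(0)=0$, summation yields that $e_k(n)$ itself is a polynomial in $n$ of degree at most $w_k$.

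Finally, the passage to the binomial form is classical: the polynomials $\binom{x}{1}, \binom{x}{2}, \ldots, \binom{x}{w_k}$ form a $\mathbb{Z}$-basis for the group of integer-valued polynomials of degree at most $w_k$ vanishing at $0$. Because $e_k(n) \in \mathbb{Z}$ for every integer $n$ and $e_k(0)=0$, one obtains uniquely determined integer constants $\lambda_{k,\ell}$ such that $f_k(n) = \sum_{\ell=1}^{w_k} \lambda_{k,\ell}\binom{n}{\ell}$. That these constants depend only on the indices (and not on the particular group $G$ or the chosen $a,b$) follows by carrying out the entire argument inside the free nilpotent group of class $\gamma$ on two generators, and then specializing.

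The main obstacle is the middle step: rigorously controlling how exponents evolve through the collection process to justify the degree bound $\deg e_k \le w_k$. A direct combinatorial proof, as in Hall's original treatment, is intricate because one must simultaneously track all commutators generated at a given stage and verify that no ``hidden'' contributions of too high degree can slip in; the conceptual safeguard is precisely the weight-additivity of the bracket, which keeps the inductive structure on $w_k$ from collapsing.
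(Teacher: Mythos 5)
This theorem is imported verbatim from P.~Hall \cite[3.1, (3.21)]{Hall1}; the paper gives no proof of its own, so there is nothing internal to compare against. Your sketch follows Hall's own route (the collection process, followed by the observation that the exponents are integer-valued polynomials vanishing at $0$ and hence $\mathbb{Z}$-combinations of binomial coefficients), and the final step and the specialization from the free nilpotent group of class $\gamma$ on two generators are handled correctly. The outline is sound.

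The gap is exactly where you locate it, and it is not merely ``intricate bookkeeping'': the middle step is the entire content of the theorem, and as written your recurrence is not correct. First, $e_k(n+1)-e_k(n)$ cannot be a function of $e_2(n),\dots,e_{k-1}(n)$ alone: already for $k=2$ one has $e_2(n+1)-e_2(n)=n$, coming from dragging the new $a$ past the block $b^n$; the recurrence must also involve the exponents of $c_0=a$ and $c_1=b$, i.e.\ $n$ itself. Second, the degree bound $\deg\bigl(e_k(n+1)-e_k(n)\bigr)\le w_k-1$ rests on an unproved claim: that every copy of $c_k$ created while collecting $(ab)^n\cdot ab$ (starting from the already-collected form of $(ab)^n$) must involve the newly appended $a$ or $b$ as an entry at some level of its bracket structure. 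Only then does weight-additivity force the remaining atoms to have total weight at most $w_k-1$, so that the number of such configurations is a product of the form $\binom{n}{i}\,e_{j_1}(n)\cdots e_{j_s}(n)$ with $i+w_{j_1}+\cdots+w_{j_s}\le w_k-1$. This claim is true but requires an argument about what the collection process does to an already-normalized block; Hall avoids the recurrence altogether by directly counting, for each complex commutator of weight $w$, the selections of $w$ of the $n$ syllables that give rise to it, which produces the binomial expression in one stroke. Finally, the theorem as stated holds for \emph{any} integer $n$, whereas collection only produces the identity for $n\ge 1$; one must extend to negative $n$ (e.g.\ by noting that in the free nilpotent group the relevant graded quotients are free abelian, so a polynomial identity valid for all positive $n$ persists for all integers). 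None of these is fatal, but the degree-bound step needs to be carried out before the proof is complete.
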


	We point out that each $f_k$ ($2 \le k \le r$) is a polynomial without free term, and with the least common denominator of its coefficients dividing $\gamma!$. As a result, whenever $\gamma ! \mid X$, we have the divisibility
	\begin{equation}\label{divizfact}
		\tfrac X{\gamma!} \mid f_k(X), \; \text{ for all } \; k \in \{2, \ldots, r\}.
	\end{equation} 
	We recall that $f_k$ are well-defined only after a given ordering of the complex commutators has been fixed. Of course, once this choice is made, $f_k$ is now a fixed polynomial, which does not depend on $\gamma$. It is also important to note that the degenerate commutators may be removed from the sequence $(c_i)_{i \ge 0}$, since they do not contribute at all to the formula. However, not wishing to alter the original statement of this theorem, we leave the degenerate commutators there as well. For the proof of Theorem \ref{constanteleBC} we will need the following technical lemma, which might be of independent interest and useful in other applications.
	\begin{lemma}\label{lemcenter}
		There exists a positive integer $A = A(\gamma)$ depending solely on the nilpotency class $\gamma$ of $G$, with prime factors at most $\gamma$, such that whenever
		\begin{equation}\label{powersofc}
			c_0^n, c_1^n, c_2^n, \ldots, c_r^n \in Z(G),
		\end{equation}
		for some integer $n$, we also have
		\begin{equation}\label{powersofc2}
			c_2^{n \cdot A} = c_3^{n \cdot A} = \cdots = c_r^{n \cdot A} = 1.
		\end{equation}
	\end{lemma}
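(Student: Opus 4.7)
The plan is to prove Lemma \ref{lemcenter} by downward induction on the weight $w$, establishing that $c^{n A_w} = 1$ for every complex commutator $c$ of weight $w \in \{2,\ldots,\gamma\}$, where $A_w := (\gamma!)^{\gamma-w}$. Setting $A := A_2 = (\gamma!)^{\gamma-2}$ will then yield the lemma, and since every prime factor of $\gamma!$ is at most $\gamma$, the same holds for $A$.

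The technical engine is a master identity extracted from Hall's Theorem \ref{hallform}. Writing $c = [S, T]$ with $S, T$ themselves complex commutators in $a, b$, the hypothesis gives $S^{nM} = (S^n)^M \in Z(G)$ for every positive integer $M$, hence
\[
1 \;=\; [S^{nM}, T] \;=\; S^{-nM} \cdot (Sc)^{nM},
\]
using $T^{-1}ST = S \cdot [S, T] = Sc$. Expanding $(Sc)^{nM}$ by Theorem \ref{hallform} in the two variables $S, c$, and observing that every non-degenerate complex commutator in $S, c$ of $(S, c)$-weight $\ge 2$, when interpreted as a complex commutator in $a, b$, has weight at least $w(S) + w(c) > w$, one arrives at the master identity
\[
c^{nM} \;=\; \prod_j e_j^{-f_j(nM)},
\]
valid for every integer $M$, where each $e_j$ is a complex commutator in $a, b$ of weight strictly greater than $w$, and each $f_j(x)$ is a Hall polynomial of the form $\sum_k \lambda_{j,k}\binom{x}{k}$ with $k \le \gamma$.

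The induction now proceeds smoothly. In the base case $w = \gamma$, every $e_j$ has weight exceeding $\gamma$ and hence vanishes, so $c^n = 1$ and $A_\gamma = 1$ works. For the inductive step, assume the claim at all weights greater than $w$, set $A_w := \gamma! \cdot A_{w+1}$, and substitute $M = A_w$ into the master identity. For any $k \le \gamma$,
\[
\binom{nA_w}{k} \;=\; nA_{w+1} \cdot \frac{\gamma!}{k!} \cdot (nA_w-1)(nA_w-2)\cdots(nA_w-k+1)
\]
is an integer multiple of $nA_{w+1}$, since $\gamma!/k!$ is integral. Hence $f_j(nA_w)$ is itself a multiple of $nA_{w+1}$, and by the inductive hypothesis $e_j^{nA_{w+1}} = 1$, so each factor $e_j^{-f_j(nA_w)}$ is individually trivial and the whole product collapses to $1$. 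The potential obstacle one anticipates---that the $e_j$'s sit in the non-abelian subgroup $\gamma_{w+1}(G)$, so raising $c^n = \prod e_j^{-f_j(n)}$ to a power would normally require a delicate collection process---is sidestepped by plugging the boosted exponent $M = A_w$ directly into the master identity (legal because $S^{nM}$ remains central), killing each factor separately and avoiding any rearrangement of the product. This is what keeps $A$ bounded by a fixed power of $\gamma!$ rather than something much larger.
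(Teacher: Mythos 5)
Your proposal is correct and follows essentially the same route as the paper's proof: write $c = [S,T]$, use centrality of $S^{nM}$ to obtain $(Sc)^{nM} = S^{nM}$, expand by Hall's Theorem \ref{hallform} so that $c^{nM}$ becomes a product of strictly higher-weight commutators, and boost the exponent by a factor of $\gamma!$ at each step of a downward induction. The only real difference is that you induct on the weight rather than on the index in the ordering (\ref{ordine}), which yields the slightly sharper constant $A = (\gamma!)^{\gamma - 2}$ in place of the paper's $(\gamma!)^{r-2}$.
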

	
	We stress the fact that in (\ref{powersofc}) the indexing starts from 0, while in (\ref{powersofc2}) it starts from 2. In other words, more effort must be put in to annihilate the powers of $c_2, \ldots, c_r$. This result will be crucial, as it will allow one to induct on the nilpotency class $\gamma$, reducing questions in $G$ to questions in $G/Z(G)$.
	
	\begin{proof}
		In what follows we will take $A = (\gamma !)^{r - 2}$, although potentially better uniform bounds could be found by a deeper analysis. We recall that $r$ is also fully dependent on $\gamma$, since it represents the greatest index of a commutator of weight $\gamma$. Therefore, our choice of $A$ is indeed a function only of $\gamma$, whose prime divisors do not exceed $\gamma$.
		
		We actually prove the slightly stronger result that 
		\begin{equation} \label{finer} 
			c_k^{n \cdot (\gamma!)^{r - k}} = 1 \quad \text{whenever} \quad 2 \le k \le r. 
		\end{equation}
		This will be shown by means of a downward induction, starting with the initial step $k = r$, and then going down until $k = 2$.
		
		First of all, note that for any $k$ in $\{2, \ldots, r\}$ the complex commutator $c_k$ can be expressed canonically as the commutator of some complex commutators $c_i, c_j$ with $0 \le i, j < k$, i.e. $c_ic_k = c_j^{-1}c_ic_j$. Raising this to a power $N$ divisible by $n$, we get
		\begin{equation} \label{cjck1} 
			(c_ic_k)^N = (c_j^{-1} c_i c_j)^N = c_j^{-1} c_i^N c_j = c_i^N, 
		\end{equation}
		since $c_i^N \in Z(G)$, according to (\ref{powersofc}). On the other hand, by expanding the power $(c_ic_k)^N$ as in Theorem \ref{hallform} we get
		\begin{equation} \label{cjck2} 
			(c_ic_k)^N = c_i^Nc_k^N d_2^{f_2(N)} d_3^{f_3(N)} \cdots d_r^{f_r(N)},
		\end{equation}
		where $d_\ell$ ($2 \le \ell \le r$) are complex commutators in the symbols $c_i, c_k$. In particular, they are also complex commutators in $a$ and $b$. If in the expression of $d_\ell$ the symbol $c_k$ appears at least once, then its weight as a commutator in $a, b$ exceeds that of $c_k$. Otherwise, only $c_i$'s are used, and the complex commutator is degenerate. Thus, all non-degenerate $d_\ell$ appear to the right of $c_k$ in the ordering (\ref{ordine}), when viewed as complex commutators in $a$ and $b$. Combining (\ref{cjck1}), (\ref{cjck2}) and canceling the $c_i^N$ yields
		\begin{equation} \label{reduct} 
			1 = c_k^N d_2^{f_2(N)} d_3^{f_3(N)} \cdots d_r^{f_r(N)}.
		\end{equation}
		We proceed now with the induction argument. If we are in the initial case $k = r$, then any non-degenerate commutator $d_\ell$ is at the right of $c_r$ in our ordering. Since $r$ is the largest index with $w_r \le \gamma$, we learn that in fact all $d_\ell$ are the identity. Plugging now $N = n$ in (\ref{reduct}) gives $c_r^n = 1$, which is precisely (\ref{finer}) for $k = r$, thus proving the initial step of the induction.
		
		Assuming now that the statement (\ref{finer}) has been proven for $r, r-1, \ldots, k + 1$, we wish to prove it for $k$. First of all, by applying (\ref{divizfact}) to the case $X = n \cdot (\gamma!)^{r - k}$, we get
		\begin{equation}\label{divizibilitatea}
			n \cdot (\gamma!)^{r - k - 1} \mid f_\ell(n \cdot (\gamma!)^{r - k}),
		\end{equation}
		for all $\ell$ between 2 and $r$. Since the power $n \cdot (\gamma!)^{r - k - 1}$ kills all commutators to the right of $c_k$ (by the inductive hypothesis), in particular it kills all non-degenerate $d_{\ell}$. Thus, in view of the divisibility (\ref{divizibilitatea}), one obtains that $d_\ell^{f_\ell(n \cdot (\gamma!)^{r - k})} = 1$ for all $\ell \in \{2, \ldots, r\}$. So all that remains in (\ref{reduct}) after plugging in $N = n \cdot (\gamma!)^{r - k}$ is $c_k^{n \cdot (\gamma!)^{r-k}} = 1$, i.e. (\ref{finer}) holds for $k$ as well, which concludes the inductive argument.
	\end{proof}

	We will now proceed with the following result that gives valuable information on the ratio $\ot(ab)/\ot(a,b)$ in finite nilpotent groups.
	
	\begin{theorem}\label{constanteleBC}
		Let $G$ be a finite nilpotent group of class $\gamma$. There exist two integer constants $B = B(\gamma)$ and $C = C(\gamma)$ depending solely on the nilpotency class $\gamma$, and having prime factors at most $\gamma$, such that
		\begin{equation}\label{celedouadivizibilitati}
			\ot(ab) \mid \ot(a, b) \cdot B\quad \text{and}\quad
			\ot(a, b) \mid \ot(ab) \cdot C
		\end{equation}
	for every elements $a, b$ in $G$.
	\end{theorem}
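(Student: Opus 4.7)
The plan is induction on $\gamma$, reducing to the two-generated case by replacing $G$ with $\langle a,b\rangle$, since every order in the statement depends only on this subgroup, which is still nilpotent of class at most $\gamma$. The base case $\gamma=1$ is trivial: an abelian group has $\ot(ab)=\ot(a,b)$, so $B(1)=C(1)=1$.

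For the first divisibility $\ot(ab)\mid\ot(a,b)\cdot B$, set $n=\ot(a,b)$. The cornerstone will be the observation that $a^n,b^n\in Z(G)$: from $a^nb^n=1$ we obtain $a^n=b^{-n}$, so $[a^n,b]=[b^{-n},b]=1$, placing $a^n$ in the centralizer of $\langle a,b\rangle=G$, and similarly for $b^n$. With this in hand I would run a strengthened induction, proving that there is a constant $K(\gamma)$ with prime factors at most $\gamma$ such that $\ot(c)\mid n\cdot K(\gamma)$ for every complex commutator $c$ in $a,b$ of weight at least $2$. In the inductive step, applying the strengthened hypothesis in $\bar G=G/Z(G)$ (which has class $\gamma-1$ and $\ot_{\bar G}(\bar a,\bar b)\mid n$) gives $c^{nK(\gamma-1)}\in Z(G)$ for every complex commutator of weight $\ge 2$. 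Combined with the centrality of $a^n,b^n$, which forces $a^{nK(\gamma-1)},b^{nK(\gamma-1)}\in Z(G)$, this verifies the hypothesis of Lemma \ref{lemcenter} with $M:=nK(\gamma-1)$; the lemma then yields $c^{MA(\gamma)}=1$, so one may take $K(\gamma):=K(\gamma-1)\cdot A(\gamma)$.

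The main bound then follows from Hall's formula (Theorem \ref{hallform}) applied at $N:=nK(\gamma)\gamma!$: since $n\mid N$ one has $a^Nb^N=1$, and by \eqref{divizfact} each $f_k(N)$ is a multiple of $N/\gamma!=nK(\gamma)$, hence of $\ot(c_k)$. Therefore $(ab)^N=\prod_{k\ge 2}c_k^{f_k(N)}=1$, giving $\ot(ab)\mid nK(\gamma)\gamma!$, so that $B(\gamma):=K(\gamma)\gamma!$ works.

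For the second divisibility $\ot(a,b)\mid\ot(ab)\cdot C$, I would run an analogous induction with $m=\ot(ab)$ and $(ab)^m=1$ in place of $a^nb^n=1$. The main obstacle will be that $(ab)^m=1$ does not immediately pin down any power of $a$ or $b$ in $Z(G)$, so the analog of the key fact must be manufactured inductively: one shows $a^{mX(\gamma)},b^{mX(\gamma)}\in Z(G)$ for some $X(\gamma)$ depending only on $\gamma$, by expanding $[a^m,b]$ via Hall--Petrescu into iterated commutators of weight $\ge 2$ and bounding their orders through the same inductive framework. Once this centrality is in place, the Lemma \ref{lemcenter}--Hall's formula mechanism closes the induction exactly as above and yields $C(\gamma)$. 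In both directions the only primes appearing in $B(\gamma),C(\gamma)$ come from $A(\gamma)$ and $\gamma!$, hence are bounded by $\gamma$.
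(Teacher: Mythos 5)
Your argument for the first divisibility is correct and is essentially the paper's: the key observation that $a^n=b^{-n}$ is central in $\langle a,b\rangle$ for $n=\ot(a,b)$, the induction through $G/Z(G)$ feeding into Lemma \ref{lemcenter}, and the final application of Theorem \ref{hallform} together with \eqref{divizfact} all match the paper, with your $B(\gamma)=K(\gamma)\,\gamma!$ playing the role of the paper's $B'(\gamma)\cdot\gamma!$.

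The second divisibility is where the real work lies, and your sketch has a genuine gap exactly at the step you flag as the ``main obstacle.'' Inside the inductive step, the only control you have over the weight-$\ge 2$ commutators is what the inductive hypothesis in $G/Z(G)$ provides, namely that suitable powers of them are \emph{central}, not trivial. Expanding $[a^{mX},b]$ into iterated commutators of weight $\ge 2$ therefore yields at best $[a^{mX},b]\in Z(G)$, i.e. $a^{mX}\in Z_2(G)$, which does not verify the hypothesis of Lemma \ref{lemcenter}. To get $[a^{mX},b]=1$ you would need the \emph{orders} of those commutators to divide the relevant exponents, but bounding those orders is precisely the conclusion of the induction you are running, so the argument as sketched is circular. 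Note also that the collection formula for $[a^N,b]$ holds for arbitrary $N$ and never uses the hypothesis $(ab)^m=1$, so it cannot by itself single out the exponent $m=\ot(ab)$. The paper closes this gap by a different route: it applies Hall's formula to $(ab)^{M}$ with $M=\ot(ab)\cdot C(\gamma-1)$, a multiple of $\ot(ab)$, so that $1=(ab)^M=a^Mb^M\cdot c_2^{f_2(M)}\cdots c_r^{f_r(M)}$; by the inductive hypothesis and \eqref{divizfact} the commutator tail is a single central element $z$, whence $b^M=a^{-M}z^{-1}$ commutes with $a$ and $a^M=z^{-1}b^{-M}$ commutes with $b$, so both are central in $\langle a,b\rangle=G$. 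With that substitution in place of your Hall--Petrescu step, your induction closes and the rest of your outline goes through.
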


	\begin{proof}
		Note that we may assume that $G = \langle a, b \rangle$, without restricting the generality of the statement.
		
		(i) We first prove the existence of the constant $B$ with the desired properties. We will actually prove the stronger result that there exists an integer $B = B' \cdot \gamma!$ so that the power $\ot(a, b) \cdot B'$ kills all commutators $c_k$, for $k \ge 2$ (and so that $B' = B'(\gamma)$ has prime factors at most $\gamma$). To see that this indeed implies the desired conclusion, first note that each $f_k$ ($2 \le k \le r$) satisfies
		\begin{equation}\label{altadiv}
			\ot(a, b) \cdot B' \mid f_k(\ot(a, b) \cdot B),
		\end{equation}
		in view of (\ref{divizfact}). So
		\begin{equation*}
			(ab)^{\ot(a, b) \cdot B} = a^{\ot(a, b) \cdot B}b^{\ot(a, b) \cdot B} \cdot c_2^{f_2(\ot(a, b) \cdot B)} \cdots c_r^{f_r(\ot(a, b) \cdot B)} = a^{\ot(a, b) \cdot B}b^{\ot(a, b) \cdot B} = 1,
		\end{equation*}
		due to Theorem \ref{hallform}, relation (\ref{altadiv}), and the fact that $a^nb^n = 1$ for some integer $n$ if and only if $\ot(a, b) \mid n$. Consequently, $\ot(ab) \mid \ot(a, b) \cdot B$, as desired.
		
		We will prove this stronger result by induction on the nilpotency class $\gamma$. The initial step when $\gamma = 1$ refers to abelian groups, in which case all commutators naturally vanish, and one may take $B'(1)=1$. 
		So let us assume that the result is true for nilpotent groups of class at most $\gamma - 1$, and try to prove it for our group $G$ of class at most $\gamma$. In order to use the inductive hypothesis, we look at the images of the elements in the quotient group $G/Z(G)$, whose class is $\gamma -1$ (see \cite[Lemma 2.12]{Clement}, for instance). Indeed, if $\hat x$ represents the image of $x$ under the quotient map, we already know that
		\begin{equation*}
			\hat c_k^{\;\ot(\hat a, \hat b) \cdot B'(\gamma - 1)} = \hat 1\quad \text{ for all } k\ge 2.
		\end{equation*}
		Also, $\ot(\hat a, \hat b) \mid \ot(a, b)$ because $\hat a^{\ot(a, b)} \hat b^{\ot(a, b)} = \widehat{a^{\ot(a, b)} b^{\ot(a, b)}} = \hat 1$. So we can actually get rid of the hats on $a, b$ in the previous equation, to obtain
		\begin{equation*}
			\hat c_k^{\;\ot(a, b) \cdot B'(\gamma - 1)} = \hat 1.
		\end{equation*}
		This means that all $c_k$ ($2 \le k \le r$) raised to the power $\ot(a, b) \cdot B'(\gamma - 1)$ must enter the center $Z(G)$. In order to apply Lemma \ref{lemcenter} it remains to prove the same for $c_0 = a$ and $c_1 = b$. To do so, observe that $b^{\ot(a, b)} = a^{-\ot(a, b)}$ commutes with both $a$ and $b$, so both $a^{\ot(a, b) \cdot B'(\gamma - 1)}$ and $b^{\ot(a, b) \cdot B'(\gamma - 1)}$ are in the center, as $\langle a, b\rangle = G$. So now we may apply Lemma \ref{lemcenter} with $n = \ot(a, b) \cdot B'(\gamma - 1)$, to deduce that the power $\ot(a, b) \cdot B'(\gamma - 1) \cdot A(\gamma)$ kills all $c_k$ ($2 \le k \le r$). Thus, we may choose
		\begin{equation*}
			B'(\gamma) := B'(\gamma-1) \cdot A(\gamma),
		\end{equation*}
		and our induction is complete. Unwinding the recursive formula above, one obtains
		\begin{equation}\label{B(gama)}
			B(\gamma) = \gamma! \cdot B'(\gamma) = \gamma! \cdot A(2) \cdots A(\gamma),
		\end{equation}
		which obviously depends only on $\gamma$, and has no prime factors exceeding $\gamma$. This proves the first part of the theorem.
		
		(ii) We will now prove the existence of the integer constant $C$ with the desired properties. Much as in part (i), we will prove the stronger result that there exists $C = \gamma! \cdot C'$ such that the power $\ot(ab) \cdot C'$ kills all commutators $c_k$ with $2 \le k \le r$. To see that this is indeed a stronger result, first note that
		\begin{equation}\label{caramel}
			\ot(ab) \cdot C' \mid f_k(\ot(ab) \cdot C)
		\end{equation}
		for all $k$ in $\{2, \ldots, r\}$, which follows again from (\ref{divizfact}). Next, by Theorem \ref{hallform} we deduce that 
		\begin{equation*}
			1 = (ab)^{\ot(ab) \cdot C} = a^{\ot(ab) \cdot C}b^{\ot(ab) \cdot C} \cdot c_2^{f_2(\ot(ab) \cdot C)} \cdots c_r^{f_r(\ot(ab) \cdot C)} = a^{\ot(ab) \cdot C}b^{\ot(ab) \cdot C},
		\end{equation*}
		so $\ot(a, b) \mid \ot(ab) \cdot C$, which proves our claim.
		
		As before, we will prove this stronger result by induction on $\gamma$. The abelian case ($\gamma = 1$) is superfluous by taking $C'(1)=1$, since higher commutators are trivial by default. So let us assume that the result holds for nilpotent groups of class at most $\gamma - 1$, and prove it for groups of nilpotency class $\gamma$. If we denote by $\hat x$ the image of $x$ in the quotient $G/Z(G)$, whose class is $\gamma -1$, then the induction hypothesis tells us that
		\begin{equation*}
			\hat c_k^{\;\ot(\hat a \hat b) \cdot C'(\gamma - 1)} = \hat 1\; \text{ for all } \; k\in \{ 2,\dots ,r\} .
		\end{equation*}
		Since $(\hat a \hat b)^{\ot(ab)} = \widehat{(ab)^{\ot(ab)}} = \hat 1$, we obtain $\ot(\hat a \hat b) \mid \ot(ab)$, so we can get rid of the hats on $a, b$ in the above display, to deduce that
		\begin{equation*}
			\hat c_k^{\;\ot(ab) \cdot C'(\gamma - 1)} = \hat 1\; \text{ for all } \; k\in \{ 2,\dots ,r\} .
		\end{equation*}
		This translates to the fact that the power $\ot(ab) \cdot C'(\gamma - 1)$ takes all $c_k$ ($2 \le k \le r$) into the center $Z(G)$. Again, in order to apply Lemma \ref{lemcenter}, we must also prove this for $c_0 = a$ and $c_1 = b$. Indeed, by Theorem \ref{hallform} and (\ref{caramel}) we see that
		\begin{align*}
			1  &=  (ab)^{\ot(ab) \cdot C(\gamma - 1)} =a^{\ot(ab) \cdot C(\gamma - 1)}b^{\ot(ab) \cdot C(\gamma - 1)}\cdot c_2^{f_2(\ot(ab) \cdot C(\gamma-1))} \cdots c_r^{f_r(\ot(ab) \cdot C(\gamma-1))}\\ &= a^{\ot(ab) \cdot C(\gamma - 1)}b^{\ot(ab) \cdot C(\gamma - 1)}z,
		\end{align*}
		with $z \in Z(G)$. In particular, $b^{\ot(ab) \cdot C(\gamma - 1)} = a^{-\ot(ab) \cdot C(\gamma - 1)}z^{-1}$ commutes with $a$, and similarly, $a^{\ot(ab) \cdot C(\gamma - 1)} = z^{-1}b^{-\ot(ab) \cdot C(\gamma - 1)}$ commutes with $b$. So since $a, b$ generate $G$, we can happily conclude that the power $\ot(ab) \cdot C(\gamma - 1)$ takes all $c_k$ ($0 \le k \le r$) to $Z(G)$. Now the hypotheses of Lemma \ref{lemcenter} are satisfied, so we find that the power $\ot(ab) \cdot C(\gamma - 1) \cdot A(\gamma)$ kills all $c_k$ ($2 \le k \le r$). This means that we can choose
		\begin{equation*}
			C'(\gamma) := C(\gamma - 1) \cdot A(\gamma) = C'(\gamma - 1) \cdot A(\gamma) \cdot (\gamma-1)!
		\end{equation*}
		to complete the inductive step. Unrolling this recurrence, we can now write
		\begin{equation}\label{C(gama)}
			C(\gamma) =  \gamma! \cdot \prod_{i=2}^{\gamma} A(i) \cdot (i-1)!,
		\end{equation}
		which has only prime factors at most $\gamma$. This completes the proof of the theorem.
	\end{proof}

	We will restate here Theorem \ref{constantacontrolabila}, which now is easily seen as an immediate application of Theorem \ref{constanteleBC}.
	
	\begin{theorem}\label{constantacontrolabilaAICI}
		Let $G$ be a finite nilpotent group of class $\gamma $. There exists a finite set of positive rational numbers $S = S(\gamma)$, depending solely on the nilpotency class $\gamma$, and whose elements in reduced form contain no primes exceeding $\gamma$ in their numerators and denominators, such that for all $a, b$ in $G$, there exists $s \in S$ depending on $a$ and $b$ such that
		\begin{equation*}
			\ot(ab) = \ot(a, b) \cdot s.
		\end{equation*}
	\end{theorem}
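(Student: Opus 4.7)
The plan is to deduce Theorem \ref{constantacontrolabilaAICI} directly from Theorem \ref{constanteleBC}, which provides the two divisibilities
\begin{equation*}
	\ot(ab) \mid \ot(a, b) \cdot B(\gamma) \quad \text{and} \quad \ot(a, b) \mid \ot(ab) \cdot C(\gamma),
\end{equation*}
where the integers $B(\gamma)$ and $C(\gamma)$ depend only on $\gamma$ and have prime factors at most $\gamma$.

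First, I would observe that both $\ot(ab)$ and $\ot(a,b)$ are positive integers, so their ratio $s := \ot(ab)/\ot(a,b)$ is a positive rational number. Writing $s = p/q$ in reduced form (with $\gcd(p,q)=1$), the first divisibility says that $q \cdot \ot(ab) = p \cdot \ot(a,b)$ divides $B(\gamma) \cdot q \cdot \ot(a,b)$, so $p \mid B(\gamma) \cdot q$; since $\gcd(p,q) = 1$, this gives $p \mid B(\gamma)$. Wait, I should be more careful here. The divisibility $\ot(ab) \mid \ot(a,b) \cdot B(\gamma)$ means $\frac{\ot(a,b) \cdot B(\gamma)}{\ot(ab)} = \frac{B(\gamma)}{s} = \frac{B(\gamma) q}{p}$ is an integer, so $p \mid B(\gamma) q$, and hence $p \mid B(\gamma)$. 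Symmetrically, the second divisibility gives that $s \cdot C(\gamma) = \frac{p \cdot C(\gamma)}{q}$ is an integer, so $q \mid C(\gamma)$.

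Consequently, I would define
\begin{equation*}
	S(\gamma) := \left\{ \frac{p}{q} \in \bb Q_{>0} : \gcd(p, q) = 1, \; p \mid B(\gamma), \; q \mid C(\gamma) \right\}.
\end{equation*}
This set is manifestly finite, since $B(\gamma)$ and $C(\gamma)$ have only finitely many divisors. Moreover, the primes dividing any numerator or denominator of an element of $S(\gamma)$ in reduced form are among those dividing $B(\gamma) \cdot C(\gamma)$; by Theorem \ref{constanteleBC}, these are all at most $\gamma$. The argument above shows that for every $a, b \in G$, the ratio $\ot(ab)/\ot(a, b)$ lies in $S(\gamma)$, which is exactly the conclusion.

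There is no real obstacle here: the theorem is essentially a bookkeeping reformulation of Theorem \ref{constanteleBC}, repackaging the two one-sided divisibilities as a two-sided statement about the rational ratio. The only minor point that requires care is the direction of the argument relating the divisibilities to the reduced numerator and denominator of $s$, which is handled by the standard fact that if $\gcd(p, q) = 1$ and $p \mid Nq$ for some integer $N$, then $p \mid N$.
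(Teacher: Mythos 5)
Your proposal is correct and follows exactly the route the paper intends: Theorem \ref{constantacontrolabilaAICI} is stated in the paper as an immediate consequence of Theorem \ref{constanteleBC}, and your bookkeeping argument (reduced numerator divides $B(\gamma)$, reduced denominator divides $C(\gamma)$, hence $s$ lies in the finite set of such fractions) is precisely the intended deduction. Nothing is missing.
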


	\begin{proof}[\quad Proof of Corollary \ref{corolarcugamma}]
		Let $\gamma $ be the nilpotency class of $G$. By Theorem \ref{constantacontrolabilaAICI}, for a pair of elements $a,b$ in $G$ there exists an element $s\in S$, say $s=\frac{\alpha }{\beta }$ with $\gcd(\alpha ,\beta )=1$, such that $\ot(ab) = \ot(a, b)\cdot s$. In particular this implies $\alpha \mid \ot(ab)$ and $\beta \mid \ot(a,b)$. Thus $\alpha $ must be a divisor of $|G|$, and since $\alpha $ has prime factors at most $\gamma $ while all the prime factors of $|G|$ exceed $\gamma $, we deduce that $\alpha $ must in fact be equal to $1$. Now since $\ot(a,b)\mid \lcm(\ot(a),\ot(b))$, it follows that $\ot(a,b)$ must be a divisor of $|G|$. Thus $\beta \mid |G|$, and by the same argument above we conclude that $\beta $ too must be equal to $1$, which completes the proof.
	\end{proof}

	Another consequence of Theorem \ref{constantacontrolabilaAICI} is the following result that gives some information on the order of the commutator of two elements of finite order in nilpotent groups. 

	\begin{corollary}\label{exponentul}
		Let $G$ be a finite nilpotent group of class $\gamma$, containing two elements $a,b$. Let $n$ be the smallest positive exponent such that $a^{n}$ commutes with $b$. Then
		\begin{equation*}
			\ot([a, b]) = n \cdot s
		\end{equation*}
		for some $s \in S(\gamma)$.
	\end{corollary}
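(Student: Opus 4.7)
The plan is to reduce the statement to a direct application of Theorem \ref{constantacontrolabilaAICI}. Under the commutator convention $[a,b]=a^{-1}b^{-1}ab$ fixed in the introduction, we can factor
\[
[a,b] \;=\; a^{-1}\cdot(b^{-1}ab),
\]
so that $\ot([a,b])$ is literally the order of a product of two elements of $G$. The two steps are therefore to identify $\ot(a^{-1},\,b^{-1}ab)$ with the integer $n$, and then to apply Theorem \ref{constantacontrolabilaAICI} to the pair $(a^{-1},\,b^{-1}ab)$.

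For the first step, by Definition \ref{defo(a,b)} the mutual order $\ot(a^{-1},\,b^{-1}ab)$ is the least positive $N$ such that
\[
(a^{-1})^N(b^{-1}ab)^N \;=\; a^{-N}\,b^{-1}a^Nb \;=\; 1,
\]
which is equivalent to $b^{-1}a^Nb=a^N$, i.e.\ to $a^N$ commuting with $b$. Hence $\ot(a^{-1},\,b^{-1}ab)=n$, exactly as was already observed in the short remark following Definition \ref{defo(a,b)}.

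For the second step, since $G$ is a finite nilpotent group of class $\gamma$, Theorem \ref{constantacontrolabilaAICI} applies to the pair $(a^{-1},\,b^{-1}ab)$ and furnishes some $s\in S(\gamma)$, depending on $a$ and $b$, such that
\[
\ot([a,b]) \;=\; \ot\!\bigl(a^{-1}\cdot b^{-1}ab\bigr) \;=\; \ot(a^{-1},\,b^{-1}ab)\cdot s \;=\; n\cdot s,
\]
which is the desired identity. There is no real obstacle here: once the factorization $[a,b]=a^{-1}\cdot(b^{-1}ab)$ and the identification $\ot(a^{-1},\,b^{-1}ab)=n$ are in hand, the corollary is an immediate consequence of the main theorem of this section.
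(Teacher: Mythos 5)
Your proof is correct and follows exactly the paper's own argument: factor $[a,b]=a^{-1}\cdot(b^{-1}ab)$, identify $\ot(a^{-1},\,b^{-1}ab)$ with the least $n$ such that $a^n$ commutes with $b$, and apply Theorem \ref{constantacontrolabilaAICI} to that pair. No issues.
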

	
	\begin{proof} 
		We apply Theorem \ref{constantacontrolabilaAICI} for $x = a^{-1}$ and $y = b^{-1}ab$, to get
		\begin{equation*}
			\ot(a^{-1} b^{-1}ab) = \ot(a^{-1}, b^{-1}ab)  \cdot s.
		\end{equation*}
		Now we recall that $\ot(a^{-1}, b^{-1}ab)$ is precisely the smallest positive exponent $n$ such that $a^n$ commutes with $b$ (i.e. $a^{-n}b^{-1}a^nb = 1$).
	\end{proof}

	We mention here that P. Hall obtained in \cite[4.27]{Hall1} a result related to Corollary \ref{exponentul}, which in particular implies that $a^{\ot([a, b])}$ commutes with $b$, in the case of regular $p$-groups.

	Theorems \ref{constanteleBC} and \ref{JungAGen} have the following immediate consequence, which presents two divisibilities that $\ot(ab)$ satisfies in finite nilpotent groups of class $\gamma$.
	
	\begin{corollary}\label{incadrareNilp}
		Let $G$ be a finite nilpotent group of class $\gamma $, and $a,b$ elements in $G$ of orders $m$ and $n$, respectively, and with $|\langle a\rangle \cap \langle b\rangle |=e$. Let also $D$ and $\varepsilon$ be the same as in Theorem \ref{JungA}. Then $\ot(ab)$ satisfies the divisibilities
		\begin{equation}\label{grosierele}
			\ot(ab)\mid \lcm(m,n)\cdot \frac{B(\gamma )}{\varepsilon }\quad \text{and}\quad \frac{\lcm(m,n)}{D}\mid \ot(ab)\cdot C(\gamma ),
		\end{equation}
		with the integer constants $B(\gamma )$ and $C(\gamma )$ given by (\ref{B(gama)}) and (\ref{C(gama)}), respectively.
	\end{corollary}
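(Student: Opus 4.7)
The plan is to derive both divisibilities directly by chaining Theorem \ref{constanteleBC} with Theorem \ref{JungAGen}; no additional ideas are needed since this is essentially a bookkeeping corollary. I would begin by recalling the output of Theorem \ref{JungAGen} applied to $a,b$ with $|\langle a\rangle\cap\langle b\rangle|=e$, namely the two-sided estimate
\[
\frac{\lcm(m,n)}{D}\;\Big|\;\ot(a,b)\;\Big|\;\frac{\lcm(m,n)}{\varepsilon}.
\]
Next I would invoke Theorem \ref{constanteleBC}, which provides the two divisibilities $\ot(ab)\mid \ot(a,b)\cdot B(\gamma)$ and $\ot(a,b)\mid \ot(ab)\cdot C(\gamma)$.

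For the first divisibility in \eqref{grosierele}, the chain
\[
\ot(ab)\;\Big|\;\ot(a,b)\cdot B(\gamma)\;\Big|\;\frac{\lcm(m,n)}{\varepsilon}\cdot B(\gamma)=\lcm(m,n)\cdot\frac{B(\gamma)}{\varepsilon}
\]
follows immediately from the upper bound on $\ot(a,b)$ combined with the first estimate from Theorem \ref{constanteleBC}. For the second divisibility, I would instead combine the lower bound on $\ot(a,b)$ with the second estimate from Theorem \ref{constanteleBC}, giving
\[
\frac{\lcm(m,n)}{D}\;\Big|\;\ot(a,b)\;\Big|\;\ot(ab)\cdot C(\gamma),
\]
which is exactly the right-hand assertion.

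There is essentially no obstacle in this proof; the only tiny sanity check worth writing down is that $\lcm(m,n)\cdot B(\gamma)/\varepsilon$ is a bona fide integer, so that the divisibility statement is meaningful. This is automatic: from Theorem \ref{JungAGen} we know $\varepsilon\mid\ot(a,b)\mid\lcm(m,n)$, so $\lcm(m,n)/\varepsilon\in\bb Z$, and multiplying by the integer $B(\gamma)$ keeps the result integral. Thus the proof reduces to quoting the two previous theorems and composing the divisibilities.
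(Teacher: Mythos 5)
Your proof is correct and follows exactly the same route as the paper: quote Theorem \ref{JungAGen} for the two-sided divisibility on $\ot(a,b)$, quote Theorem \ref{constanteleBC} for the two divisibilities relating $\ot(ab)$ and $\ot(a,b)$, and compose them. The integrality remark about $\lcm(m,n)/\varepsilon$ is a harmless extra sanity check (it is already implicit in the statement of Theorem \ref{JungAGen}).
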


	\begin{proof}
		We recall that by Theorem \ref{JungAGen}, the mutual order of our elements $a,b$ satisfies the divisibilities
		\begin{equation}\label{incadrarea}
			\frac{\lcm(m,n)}{D}\mid \ot(a,b) \mid \frac{\lcm(m,n)}{\varepsilon }.
		\end{equation}
		On the other hand, Theorem \ref{constanteleBC} guarantees the existence of the two constants $B(\gamma )$ and $C(\gamma )$ given by (\ref{B(gama)}) and (\ref{C(gama)}), respectively, with $A(\gamma )$ given by Lemma \ref{lemcenter}, and such that
		\begin{equation*}
			\ot(ab) \mid \ot(a, b) \cdot B(\gamma )\quad \text{and}\quad
		\ot(a, b) \mid \ot(ab) \cdot C(\gamma ).
		\end{equation*}
	Using now (\ref{incadrarea}), we immediately deduce that $\ot(ab)$ satisfies the divisibilities (\ref{grosierele}).
	\end{proof}

	\begin{remark}\label{calaJung}
		By (\ref{incadrarea}) we see now that in finite groups of class $\gamma$, sharper estimates for $B(\gamma )$ and $C(\gamma )$ will lead to sharper estimates for the order of the product of two arbitrary elements, as the constants $B$ and $C$ do not depend on the elements $a$ and $b$ that we choose. 
	\end{remark}
	
	\section{Deeper analysis for nilpotent groups of class 2} \label{SClass2}

	A direct application of Theorem \ref{constantacontrolabilaAICI} to the case $\gamma = 2$, using the explicit bounds $A, B, C$ constructed in the previous section gives
	\begin{equation*}
		B(2) = C(2) = 16 \Longrightarrow \frac{\ot(ab)}{\ot(a, b)} \in \{\tfrac 1{16}, \tfrac 18, \ldots, 8, 16\}.
	\end{equation*}
	As we will see, with more careful considerations we can reduce the set to just $\{\frac 12, 1, 2\}$! So already for $\gamma = 2$, our bounds are very weak. One reason for this is that Hall's Theorem \ref{hallform} does not eliminate the degenerate commutators $[a, a], [b, b]$, or the duplicate $[a, b] = [b, a]^{-1}$ form the list. Hence, significant improvements may potentially be achieved by reducing the number of factors appearing in Hall's formula. Also, our construction of $A, B, C$ is quite wasteful, since it assumes the worst at all times (e.g. we work with uniform bounds on everything). We also mention without any proof that for $\gamma = 3$, it seems that the constants $B, C$ could be reduced to 12. We thus believe that there is much potential in studying the asymptotic behavior of the optimal constants $B$ and $C$, although we do not study it in this paper.

	Let us now direct our attention to the case $\gamma = 2$. It is easily seen, from the definition of nilpotency in terms of the lower central series, that a group $G$ has class 2 (or lower) if and only if $G' = [G, G]$ is central. In particular, if $a$ and $b$ are two elements of a nilpotent group $G$ of class 2, then the commutator $[a, b] = a^{-1}b^{-1}ab$ will commute with both $a$ and $b$. This leads us to two very useful results, which are enclosed in the following famous lemma. The first part may be seen as a particularization of Halls' Theorem \ref{hallform}, but in which we already remove the degenerates $[a, a], [b, b]$ and the duplicate $[a, b]$. In order to keep this paper self-contained, we will also include a proof of this lemma.
		
	\begin{lemma}\cite[Lemma 2.2]{Gorenstein}\label{lemacom}
		Let $a, b$ be elements in a group of class 2. Then the following identities hold:	
			\quad (i) $(ab)^n = a^nb^n[b, a]^{n \choose 2}$ for all positive integers $n$,
	
			\quad (ii) $[a^i, b^j] = [a, b]^{ij}$ for all integers $i, j$.
	\end{lemma}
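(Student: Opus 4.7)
The plan is to prove (i) by a straightforward induction on $n$, using the class-2 hypothesis to push factors of $[b,a]$ around freely. The base case $n = 1$ reads $ab = ab \cdot [b,a]^0$, which is trivial. For the inductive step, I would write
\begin{equation*}
(ab)^{n+1} = (ab)^n \cdot ab = a^n b^n [b,a]^{\binom{n}{2}} \cdot ab
\end{equation*}
by the inductive hypothesis; since $[b,a] \in G' \subseteq Z(G)$, the $[b,a]$-factor commutes past everything. The only nontrivial task is then to rewrite $b^n a$ as $a b^n [b,a]^n$, which follows by iterating the basic identity $ba = ab \cdot [b,a]$ (equivalently, this is the $i = n$, $j = 1$ instance of (ii) that I prove next). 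Substituting and using $n + \binom{n}{2} = \binom{n+1}{2}$ completes the induction.

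For (ii), the key observation is that in a class-2 group the commutator map $(x,y) \mapsto [x,y]$ is bilinear with values in $Z(G)$. To establish this, I would first prove $[xy, z] = [x, z] \cdot [y, z]$ and $[x, yz] = [x, y] \cdot [x, z]$ via the standard commutator identity $[xy, z] = [x, z]^y \cdot [y, z]$, invoking centrality of $[x, z]$ to drop the conjugation by $y$. Bilinearity immediately yields $[a^i, b] = [a, b]^i$ and $[a, b^j] = [a, b]^j$ for positive $i, j$ by induction, and combining the two gives $[a^i, b^j] = [a, b]^{ij}$. The extension to negative exponents is handled by the identity $[a^{-1}, b] = [a, b]^{-1}$, which follows by applying bilinearity to $a \cdot a^{-1} = 1$ (and similarly on the second coordinate).

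I do not anticipate any real obstacle here: (i) reduces to commutator-counting after invoking centrality, and (ii) is the standard bilinearity of the commutator bracket in nilpotent groups of class 2. The one subtlety is ensuring logical non-circularity between the two parts: the inductive step in (i) relies on the special case $b^n a = a b^n [b, a]^n$. I would therefore first prove this auxiliary identity by an elementary induction on $n$ (using only $ba = ab \cdot [b, a]$ and centrality of $[b, a]$), then deduce (i), and finally treat the general (ii) via bilinearity as above.
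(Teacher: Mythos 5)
Your proposal is correct and follows essentially the same route as the paper: part (i) by induction on $n$ using centrality of $[b,a]$ (the paper multiplies the extra factor $ab$ on the left and conjugates, you multiply on the right and commute $b^n$ past $a$ --- the same computation), and part (ii) by establishing linearity of the commutator in each argument (the paper computes $[a^i,b]=a^{-i}(b^{-1}ab)^i$ directly, which handles negative $i$ in one stroke, while you derive it from the identity $[xy,z]=[x,z]^y[y,z]$ plus a separate inversion step). No gaps; the differences are cosmetic.
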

	
	\begin{proof}
		(i) We prove this statement by induction. The base case $n = 1$ is obvious, so let us assume that $(ab)^n = a^nb^n[b, a]^{n \choose 2}$ and prove it for $n + 1$. Indeed,
		\begin{align*}
			(ab)^{n+1} &= aba^nb^n[b, a]^{n \choose 2} = a \cdot ba^nb^{-1} \cdot b^{n + 1}[b, a]^{n \choose 2} \\&= a \cdot (bab^{-1})^n \cdot b^{n + 1}[b, a]^{n \choose 2} =  a \cdot (ab[b, a]b^{-1})^n \cdot b^{n + 1}[b, a]^{n \choose 2} \\ &= a \cdot (a[b, a])^n \cdot b^{n+1}[b, a]^{n \choose 2} = a^{n+1} b^{n+1} [b, a]^{n + 1 \choose 2},
		\end{align*}
		where we have implicitly used the fact that $[b, a]$ is central.
			
		(ii) First consider the case that $j = 1$, where 
		\begin{equation*}
			[a^i, b] = a^{-i}b^{-1}a^{i}b = a^{-i}(b^{-1}ab)^i = a^{-i}(a[a, b])^i = [a, b]^i.
		\end{equation*}
		Now applying this twice, we have 
		\begin{equation*}
			[a^i, b^j] = [a, b^j]^i = [b^j, a]^{-i} = [b, a]^{-ij} = [a, b]^{ij}.
		\end{equation*}
		as desired.
	\end{proof}
	
	Next, we will recall an elementary consequence of Lemma \ref{lemacom}, which allows one to obtain a formula for the order of the commutator of two elements in a finite group of class $2$.
	
	\begin{corollary}\label{ordincomutator}
		Let $G$ be a finite group of class $2$, $a,b$ elements of $G$ of orders $m$ and $n$, respectively, with $|\langle a\rangle\cap\langle b\rangle|=f$, and let $c := [a, b] = a^{-1}b^{-1}ab$. Then
		\begin{equation*}
			\ot(c)=\frac{m}{|\langle a\rangle\cap C_{G}(b)|}=\frac{n}{|\langle b\rangle\cap C_{G}(a)|}
		\end{equation*}
	is the least exponent $i$ such that $a^i$ commutes with $b$, and also the least exponent $j$ such that $b^j$ commutes with $a$. In particular, $\ot(c)\mid \frac{\gcd(m,n)}{f}$, and if any two of $m,n$ and $\ot(c)$ are coprime, then $a$ and $b$ must commute.
	\end{corollary}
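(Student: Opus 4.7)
The plan is to reduce everything to a single application of Lemma \ref{lemacom}(ii), which in our setting reads $[a^i, b^j] = c^{ij}$. The first observation I would exploit is that $a^i$ commutes with $b$ if and only if $[a^i, b] = c^i = 1$, which in turn holds if and only if $\ot(c) \mid i$. This immediately identifies $\ot(c)$ as the least positive exponent $i$ such that $a^i$ commutes with $b$, and by the symmetric argument using $[a, b^j] = c^j$, also as the least $j$ such that $b^j$ commutes with $a$. Note in passing that taking $i = m$ yields $c^m = [a^m, b] = 1$, so $\ot(c) \mid m$, and symmetrically $\ot(c) \mid n$.

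Next I would extract the asserted order formulas. Having identified the subgroup $\{i \in \mathbb{Z} : a^i \in C_G(b)\}$ of $\mathbb{Z}$ as $\ot(c)\mathbb{Z}$, it follows that $\langle a \rangle \cap C_G(b) = \langle a^{\ot(c)} \rangle$, a cyclic group whose order is $m/\gcd(m, \ot(c)) = m/\ot(c)$, using $\ot(c) \mid m$. Rearranging yields $\ot(c) = m/|\langle a \rangle \cap C_G(b)|$, and the symmetric argument delivers the other equality.

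For the divisibility $\ot(c) \mid \gcd(m,n)/f$, I would invoke the trivial inclusion $\langle a \rangle \cap \langle b \rangle \subseteq \langle a \rangle \cap C_G(b)$ — any element that is simultaneously a power of $a$ and a power of $b$ clearly commutes with $b$ — which forces $f \mid m/\ot(c)$, i.e.\ $\ot(c) \mid m/f$. The symmetric inclusion gives $\ot(c) \mid n/f$, so $\ot(c) \mid \gcd(m/f, n/f) = \gcd(m,n)/f$. The final assertion is then immediate: since $\ot(c)$ divides each of $m$ and $n$, any coprimality between two elements of $\{m, n, \ot(c)\}$ forces $\ot(c) = 1$, whence $c = 1$ and $a, b$ commute. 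There is no genuine obstacle here; Lemma \ref{lemacom}(ii) does essentially all of the work, and the only minor care needed is to note $\ot(c) \mid m$ before simplifying $\gcd(m, \ot(c))$ in the order computation.
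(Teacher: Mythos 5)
Your argument is correct and follows essentially the same route as the paper: both hinge on Lemma \ref{lemacom}(ii) to identify $\{i : a^i \in C_G(b)\}$ with $\ot(c)\mathbb{Z}$, and then read off $\ot(c) = m/|\langle a\rangle \cap C_G(b)|$ (the paper via the order of $\hat a$ in $\langle a\rangle/(\langle a\rangle\cap C_G(b))$, you via $|\langle a^{\ot(c)}\rangle| = m/\ot(c)$, which is the same computation). You also spell out the ``in particular'' claims, which the paper leaves implicit, and your derivation of those is sound.
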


	\begin{proof}
		By Lemma \ref{lemacom} (ii) with $j=1$ we have $[a^{i},b]=c^{i}$ for all $i$, so
		\begin{align*}
			\ot(c) &= {\rm min}
			\{i\ge 1:[a^{i},b]=1\}\\
			&= {\rm min}
			\{i\ge 1:a^{i}\in C_{G}(b)\} \\
			&= {\rm min}
			\{i\ge 1:a^{i}\in \langle a\rangle \cap C_{G}(b)\}.
		\end{align*}
		Thus $\ot(c)$ is the order of $\hat{a}$ in the factor group $\langle a\rangle / \langle a\rangle\cap C_{G}(b)$, that is $\frac{m}{|\langle a\rangle\cap C_{G}(b)|}$. Also, since $[a^i, b] = 1$ is essentially equivalent to saying that $a^i$ commutes with $b$, we get that $\ot(c)$ is the least positive exponent $i$ such that $a^i$ commutes with $b$. Using now Lemma \ref{lemacom} (ii) with $i=1$ we see that $[a,b^j]=c^{j}$ for all $j$, and we deduce in a similar way that $\ot(c)$ is also equal to $\frac{n}{|\langle b\rangle\cap C_{G}(a)|}$, and that it is the least positive exponent $j$ such that $b^j$ commutes with $a$.
	\end{proof}

	We will restate here Theorem \ref{IntroducereClass2}, which is our main result in this section.
	
	\begin{theorem}\label{1Class2}
		Let $a, b$ be elements of finite order in a nilpotent group of class 2, and let $r$ be the order of $c = [b, a] := b^{-1}a^{-1}ba$. Then $r$ divides both $\ot(ab)$ and $\ot(a, b)$, and one has the formula
		\begin{equation}\label{turnip}
			\ot(ab) = \ot(a, b) \cdot \frac{\ot(a^rb^rc^{r \choose 2})}{\ot(a^rb^r)},
		\end{equation}
		where the factor $\frac{\ot(a^rb^rc^{r \choose 2})}{\ot(a^rb^r)}$ lies in the set $\{\frac 12, 1, 2\}$.
	\end{theorem}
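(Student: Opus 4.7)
The plan is to start from Lemma \ref{lemacom}(i), which gives $(ab)^n = a^nb^nc^{n \choose 2}$; hence $\ot(ab)$ is the least $n\ge 1$ with $a^nb^nc^{n \choose 2} = 1$, while $\ot(a,b)$ is the least $n\ge 1$ with $a^nb^n = 1$. The first thing I would verify is that $r = \ot(c)$ divides both of these quantities. If $a^nb^n = 1$ then $b^n = a^{-n}$ commutes with $a$, so by Corollary \ref{ordincomutator} the smallest such $n$ is a multiple of $r$. If instead $(ab)^n = 1$, then $a^nb^n = c^{-{n \choose 2}}$ is central (the commutator subgroup is central in class 2), so in particular it commutes with $b$; this forces $a^n$ to commute with $b$ and, once again by Corollary \ref{ordincomutator}, yields $r \mid n$.

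With these divisibilities in hand, Corollary \ref{reduce} applied with $s = r$ gives $\ot(a,b) = r \cdot \ot(a^r, b^r)$, and because $a^r$ commutes with $b$ (and hence with $b^r$), the right-hand side equals $r \cdot \ot(a^rb^r)$. Dually, Lemma \ref{orphlema2} applied to $ab$ with exponent $r$ yields $\ot(ab) = r \cdot \ot((ab)^r) = r \cdot \ot(a^rb^rc^{r \choose 2})$, after one more use of the Hall-type identity of Lemma \ref{lemacom}(i). Dividing these two formulas gives exactly (\ref{turnip}).

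The only remaining step, and the only one requiring any real work, is to check that the factor $\ot(a^rb^rc^{r \choose 2})/\ot(a^rb^r)$ lies in $\{\tfrac 12, 1, 2\}$. If $r$ is odd then ${r \choose 2} = r(r-1)/2$ is a multiple of $r$, so $c^{r \choose 2} = 1$ and the factor equals $1$. If $r$ is even, a direct reduction modulo $r$ gives ${r \choose 2} \equiv r/2 \pmod r$, so $y := c^{r \choose 2} = c^{r/2}$ has order $2$; moreover $y$ is central, so it commutes with $x := a^rb^r$. Writing $N := \ot(x)$ and $M := \ot(xy)$, the identities $(xy)^{2N} = x^{2N}y^{2N} = 1$ and $x^{2M} = (xy)^{2M}y^{-2M} = 1$ force $M \mid 2N$ and $N \mid 2M$, trapping the ratio $M/N$ in $\{\tfrac 12, 1, 2\}$. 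I expect this even-$r$ arithmetic to be the main (though still elementary) obstacle; the rest of the argument is an almost mechanical unwinding, built on the single observation that passing to $r$th powers converts mutual orders into ordinary orders inside an abelian subgroup.
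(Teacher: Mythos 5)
Your proof is correct. The two divisibility claims $r \mid \ot(ab)$ and $r \mid \ot(a,b)$, and the derivation of (\ref{turnip}) by reducing to $r$th powers via Corollary \ref{reduce}, Lemma \ref{orphlema2}, Lemma \ref{lemacom}(i) and the observation that $\ot(a^r,b^r)=\ot(a^rb^r)$, follow the paper's argument essentially step for step. Where you genuinely diverge is the last stage, bounding the factor: the paper sets $x=a^rb^r$, $y=c^{r\choose 2}$, notes $\ot(y)\in\{1,2\}$, and then invokes Jungnickel's Theorem \ref{JungA} for the pair $(x,y)$, splitting into cases according to the parity of $t=\ot(x)$ and the resulting values of $D$ and $\varepsilon$. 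You instead give a self-contained divisibility argument: since $y^2=1$ and $y$ is central, $M=\ot(xy)$ and $N=\ot(x)$ satisfy $M\mid 2N$ and $N\mid 2M$, and comparing $p$-adic valuations (equality at odd primes, discrepancy at most one at $p=2$) traps $M/N$ in $\{\tfrac12,1,2\}$. Your route is shorter and avoids the external theorem; the paper's route is the one that feeds into Corollaries \ref{CoroCl2.1}--\ref{CoroCl2.3}, where the value of $D$ is used to decide \emph{which} of the three values occurs, so it buys more refined information downstream. A small bonus of your computation $ {r\choose 2}\equiv r/2 \pmod r$ for even $r$ is that it identifies $y=c^{r/2}$ explicitly, which the paper does not record at this point.
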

	
	\begin{proof}
		To prove that $r \mid \ot(ab)$, note that the equality $1 = (ab)^{\ot(ab)}$ can be further written via Lemma \ref{lemacom} (i) as
		\begin{equation*}
			1 = a^{\ot(ab)}b^{\ot(ab)}[b, a]^{\ot(ab) \choose 2},
		\end{equation*}
		which in particular implies that $a^{\ot(ab)}$ commutes with $b$. But then by Lemma \ref{lemacom} (ii), we observe that $[b, a]^{\ot(ab)} = [b, a^{\ot(ab)}] = 1$, so indeed $r \mid \ot(ab)$. To prove that $r \mid \ot(a, b)$ we reason similarly: The equation $1 = a^{\ot(a, b)}b^{\ot(a, b)}$ implies that $a^{\ot(a, b)}$ commutes with $b$, which in turn implies via Lemma \ref{lemacom} (ii) that $[b, a]^{\ot(a, b)} =[b, a^{\ot(a,b)}]= 1$, as needed. 
			
		Equation (\ref{turnip}) follows now after a straightforward computation, by repeatedly applying Corollary \ref{reduce} and observing that $\ot(a^r, b^r) = \ot(a^rb^r)$ (as $a^{r}$ commutes with $b^{r}$):
		\begin{equation*}
			\ot(ab) = r\cdot \ot((ab)^r)=r \cdot \ot(a^rb^rc^{r \choose 2}) = r \cdot \ot(a^r, b^r) \cdot \frac{\ot(a^rb^rc^{r \choose 2})}{\ot(a^r, b^r)} = \ot(a, b) \cdot  \frac{\ot(a^rb^rc^{r \choose 2})}{\ot(a^rb^r)}.
		\end{equation*}
	
		Lastly, we show that the factor $\frac{\ot(a^rb^rc^{r \choose 2})}{\ot(a^rb^r)}$ lies in $\{\frac 12, 1, 2\}$. Let us denote $x := a^rb^r$, $y := c^{r \choose 2}$, and notice that $y$ has order 1 or 2. If $y$ has order 1, then the fraction trivially equals 1. Thus, we may assume that $\ot(y) = 2$.  If we denote $\ot(x)$ by $t$, then we notice that Jungnickel's Theorem \ref{JungA} gives
		\begin{equation*}
			\frac{\lcm(t, 2)}{D} \mid \ot(a^rb^rc^{r \choose 2}) \mid \frac{\lcm(t, 2)}{\varepsilon},
		\end{equation*}
		where $D, \varepsilon$ are associated to the pair $(x, y)$ (and not $(a, b)$). When $t$ is odd, we have $D = \varepsilon = 1$ and $\lcm(t, 2) = 2t$, so $\frac{\ot(a^rb^rc^{r \choose 2})}{\ot(a^rb^r)} = 2$. When $t$ is even, we have $D = \varepsilon \in \{1, 2\}$, and $\lcm(t, 2) = t$. So in this case $\frac{\ot(a^rb^rc^{r \choose 2})}{\ot(a^rb^r)} = 2$ lies in the set $\{\frac 12, 1\}$. This finishes the proof.
	\end{proof}
	
	In particular, we obtain three corollaries, corresponding to the three possible values of $\ot(ab)/\ot(a,b)$.
	
	\begin{corollary}\label{CoroCl2.1}
		Let $a, b$ be elements of finite order in a nilpotent group of class 2, and let $r$ be the order of $c = [b, a] := b^{-1}a^{-1}ba$. If $r$ is odd, then
		\begin{equation*}
			\ot(ab) = \ot(a, b),
		\end{equation*}
	just like in the abelian case.
	\end{corollary}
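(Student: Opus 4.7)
The plan is to invoke Theorem \ref{1Class2} directly and show that the factor $\ot(a^rb^rc^{r\choose 2})/\ot(a^rb^r)$ collapses to $1$ as soon as $r$ is odd, by showing that the perturbation $c^{r\choose 2}$ simply disappears.

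The key observation is arithmetic: ${r\choose 2}=\frac{r(r-1)}{2}$, and when $r$ is odd the factor $(r-1)/2$ is an integer, so ${r\choose 2}=r\cdot\frac{r-1}{2}$ is an integer multiple of $r$. Since $c$ has order $r$ by hypothesis, it follows that
\begin{equation*}
c^{r\choose 2} = (c^r)^{(r-1)/2} = 1.
\end{equation*}
In other words, the ``correction term'' that distinguishes $(ab)^r$ from $a^rb^r$ via Lemma \ref{lemacom}(i) is trivial in this case.

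Substituting this into the formula (\ref{turnip}) of Theorem \ref{1Class2}, I get
\begin{equation*}
\ot(ab)=\ot(a,b)\cdot\frac{\ot(a^rb^rc^{r\choose 2})}{\ot(a^rb^r)}=\ot(a,b)\cdot\frac{\ot(a^rb^r)}{\ot(a^rb^r)}=\ot(a,b),
\end{equation*}
which is exactly the claim. There is no genuine obstacle here, since all the heavy lifting (the structural identity and the verification that the ratio lies in $\{\tfrac12,1,2\}$) was already completed in Theorem \ref{1Class2}; the corollary is simply the observation that the factor $c^{r\choose 2}$ is forced to be trivial under the parity assumption on $r$, ruling out the cases where the ratio equals $\tfrac12$ or $2$.
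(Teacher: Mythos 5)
Your proof is correct and follows the same route as the paper: both invoke Theorem \ref{1Class2} and observe that for odd $r$ the element $c^{r\choose 2}$ is trivial (since $r \mid {r\choose 2} = r\cdot\frac{r-1}{2}$), so the correction factor equals $1$. If anything, you make explicit the divisibility argument that the paper leaves implicit when it says this is ``the case in which $y$ has order 1.''
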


	\begin{proof}
		Following the analysis in from the main theorem, this is the case in which $y$ has order 1, and the ratio $\frac{\ot(a^rb^rc^{r \choose 2})}{\ot(a^rb^r)}$ is invariably equal to 1.
	\end{proof}

	\begin{corollary}\label{CoroCl2.2}
		Let $a, b$ be elements of finite order in a nilpotent group of class 2, and let $r$ be the order of $c = [b, a] := b^{-1}a^{-1}ba$. If $r$ is even, and $\frac 1r \cdot \ot(a, b)$ is odd, then
		\begin{equation*}
			\ot(ab) = 2 \cdot \ot(a, b).
		\end{equation*}
	\end{corollary}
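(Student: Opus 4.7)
The plan is to re-use the machinery already developed in the proof of Theorem \ref{1Class2} and simply identify which of the three cases we land in, given the parity hypotheses. Write $x := a^{r}b^{r}$, $y := c^{\binom{r}{2}}$, and $t := \ot(x)$, as in that proof. Theorem \ref{1Class2} already gives the formula
\begin{equation*}
\ot(ab) \;=\; \ot(a,b)\cdot \frac{\ot(a^{r}b^{r}c^{\binom{r}{2}})}{\ot(a^{r}b^{r})} \;=\; \ot(a,b)\cdot \frac{\ot(xy)}{t},
\end{equation*}
so it suffices to show that under our hypothesis the ratio $\ot(xy)/t$ equals $2$.

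First I would pin down $t$. Since $r\mid \ot(a,b)$ by Theorem \ref{1Class2}, Corollary \ref{reduce} gives $\ot(a^{r},b^{r})=\ot(a,b)/r$. But $a^{r}$ and $b^{r}$ commute: indeed $[a^{r},b^{r}]=[a,b]^{r^{2}}=c^{-r^{2}}=1$ by Lemma \ref{lemacom}(ii) and $\ot(c)=r$. Hence $\ot(a^{r}b^{r})=\ot(a^{r},b^{r})$, so $t=\ot(a,b)/r$, which by hypothesis is odd.

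Next I would compute $\ot(y)$. Since $r$ is even, $\binom{r}{2}=\tfrac{r}{2}(r-1)$ with $r-1$ odd, so $\gcd\bigl(r,\binom{r}{2}\bigr)=r/2$, and hence $\ot(y)=r/(r/2)=2$. Now I am exactly in the situation analyzed at the end of the proof of Theorem \ref{1Class2}: $y$ has order $2$, and $t$ is odd. Jungnickel's Theorem \ref{JungA} applied to the commuting pair $(x,y)$ gives $D=\varepsilon=1$ and $\lcm(t,2)=2t$, which forces $\ot(xy)=2t$, so $\ot(xy)/t = 2$, as desired.

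There is no real obstacle here beyond careful bookkeeping; the only moving part worth double-checking is that the parity hypothesis on $\ot(a,b)/r$ translates correctly into ``$t$ odd'' after the reduction via Corollary \ref{reduce}, and that $\binom{r}{2}$ has the expected $2$-adic valuation when $r$ is even — both of which are immediate.
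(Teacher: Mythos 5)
Your proof is correct and follows essentially the same route as the paper, which simply identifies this as the case ``$\ot(y)=2$ and $t$ odd'' in the case analysis at the end of the proof of Theorem \ref{1Class2}. You additionally spell out the bookkeeping (that $t=\ot(a,b)/r$ via Corollary \ref{reduce} and commutativity of $a^r,b^r$, and that $\ot(c^{\binom r2})=2$ for $r$ even), which the paper leaves implicit.
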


	\begin{proof}
		This corresponds to the case that $y$ has order 2, but $t$ is odd. In this particular case, we got $\ot(ab) = 2 \cdot \ot(a, b)$.
	\end{proof}
	
	\begin{corollary}\label{CoroCl2.3}
		Let $a, b$ be elements of finite order in a nilpotent group of class 2, and let $r$ be the order of $c = [b, a] := b^{-1}a^{-1}ba$. If $r$ is even, and $\frac 1r \cdot \ot(a, b)$ is even, then
		\begin{equation}
			\ot(ab) = \begin{cases}
				\frac 12 \cdot \ot(a, b)	& \text{if }\; a^{\frac 1{2}\ot(a, b)}b^{\frac 1{2}\ot(a, b)} = c^{\frac r2} \text{ and } \frac 1{2r}\ot(a, b) \text{ is odd,}\\
				\ot(a, b)					& \text{if }\; a^{\frac 1{2}\ot(a, b)}b^{\frac 1{2}\ot(a, b)} \neq c^{\frac r2} \text{ or } \frac 1{2r}\ot(a, b) \text{ is even.}
			\end{cases}
		\end{equation}
	\end{corollary}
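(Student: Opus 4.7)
The plan is to start from the formula of Theorem \ref{1Class2},
\begin{equation*}
\ot(ab) = \ot(a, b) \cdot \frac{\ot(a^rb^rc^{\binom{r}{2}})}{\ot(a^rb^r)},
\end{equation*}
and determine exactly when the ratio on the right equals $\frac{1}{2}$ versus $1$ under the present hypotheses. Write $r = 2m$, set $x := a^r b^r$ and $y := c^{\binom{r}{2}}$, and record two preliminaries. First, $\binom{r}{2} = m(2m-1) \equiv m \pmod{2m}$, so $y = c^{r/2}$ has order exactly $2$. Second, $a^r$ commutes with $b^r$ (since $[a^r, b^r] = [a,b]^{r^2} = 1$ by Lemma \ref{lemacom}(ii)), hence $\ot(x) = \ot(a^r, b^r) = \ot(a,b)/r$ by Corollary \ref{reduce}; call this quantity $t$, which is even by hypothesis. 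Since $y \in Z(G)$, the elements $x$ and $y$ commute, so the computation of $\ot(xy)$ takes place inside the abelian subgroup $\langle x, y\rangle$.

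Next, I would split on whether $y \in \langle x\rangle$. In the cyclic group $\langle x\rangle$ of even order $t$ there is a unique element of order $2$, namely $x^{t/2}$, so $y \in \langle x\rangle$ is equivalent to $y = x^{t/2}$, i.e., using that $a^r$ and $b^r$ commute, to $c^{r/2} = a^{rt/2} b^{rt/2} = a^{\ot(a,b)/2} b^{\ot(a,b)/2}$. When this fails, $\langle x\rangle \cap \langle y\rangle = 1$ and hence $\ot(xy) = \lcm(t, 2) = t$, so the ratio is $1$. When it holds, $xy = x^{t/2 + 1}$, and Lemma \ref{orphlema2} gives
\begin{equation*}
\ot(xy) = \frac{t}{\gcd(t,\, t/2 + 1)} = \frac{t}{\gcd(2,\, t/2 + 1)},
\end{equation*}
which equals $t$ if $t/2$ is even and $t/2$ if $t/2$ is odd. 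Since $t/2 = \frac{1}{2r}\ot(a,b)$, this pins down the ratio as $\frac{1}{2}$ exactly when both $a^{\ot(a,b)/2} b^{\ot(a,b)/2} = c^{r/2}$ and $\frac{1}{2r}\ot(a,b)$ is odd, and as $1$ otherwise, matching the dichotomy in the statement.

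No serious obstacle is involved; the main care is in the translation between the containment $y \in \langle x\rangle$ and the explicit equation $a^{\ot(a,b)/2} b^{\ot(a,b)/2} = c^{r/2}$, together with the correct bookkeeping of two independent parity conditions, one coming from whether $y$ lies in $\langle x\rangle$, the other from whether $t/2$ is even or odd. Once these are in place, the one-line gcd reduction above collapses the analysis to the two cases listed in the corollary.
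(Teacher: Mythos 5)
Your proof is correct. It shares its skeleton with the paper's argument: both start from the formula of Theorem \ref{1Class2}, set $x = a^rb^r$ and $y = c^{\binom r2}$, observe that $\ot(y) = 2$ and $\ot(x) = t = \ot(a,b)/r$, and then split on whether $y$ lies in $\langle x\rangle$, which (since a cyclic group of even order $t$ has a unique involution $x^{t/2}$) is exactly the condition $a^{\ot(a,b)/2}b^{\ot(a,b)/2} = c^{r/2}$. Where you diverge is in the final abelian computation: the paper feeds the pair $(x,y)$ into Jungnickel's Theorem \ref{JungA} and reads the answer off the invariant $D$ (trivial intersection forces $D=1$; intersection of order $2$ gives $D=2$ precisely when $t/2$ is odd), whereas you bypass Theorem \ref{JungA} entirely and compute $\ot(xy)$ by hand — $\lcm(t,2)=t$ when $\langle x\rangle\cap\langle y\rangle=1$, and $\ot(x^{t/2+1}) = t/\gcd(t,t/2+1) = t/\gcd(2,t/2+1)$ via Lemma \ref{orphlema2} otherwise. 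Your route is more elementary and self-contained, and it makes the parity bookkeeping completely explicit; the paper's route is shorter on the page but leans on the heavier imported theorem. The supporting details you supply ($\binom r2 \equiv r/2 \pmod r$, the commutation $[a^r,b^r]=1$ via Lemma \ref{lemacom}(ii), and $\ot(x)=\ot(a^r,b^r)=\ot(a,b)/r$ via Corollary \ref{reduce}) all check out.
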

	
	\begin{proof} 
		Finally, this is the case that $y$ has order 2, and $t$ is even. Recall that here the value of $D$ dictates the value of the ratio $\frac{\ot(a^rb^rc^{r \choose 2})}{\ot(a^rb^r)}$. That is, $\ot(ab) = \ot(a, b)$ when $D = 1$, and $\ot(ab) = \frac 12 \cdot \ot(a, b)$ when $D = 2$. To discern between these two cases, we recall the definition of $D$: it is the largest divisor of $|\langle x \rangle \cap \langle y \rangle|$, that is coprime to both $\frac{t}{\gcd(t, 2)} = \frac t2$ and $\frac{2}{\gcd(t, 2)} = 1$.
		
		If $\frac 1{2r} \ot(a, b) = \frac t2$ is even, then $D$ will be required to be odd. On the other hand, $|\langle x \rangle \cap \langle y \rangle|$ must divide $\ot(y) = 2$, so $D = 1$ and $\ot(ab) = \ot(a, b)$ as promised. Next, if $a^{\frac 1{2}\ot(a, b)}b^{\frac 1{2}\ot(a, b)} \neq c^{\frac r2}$, then essentially $\langle x \rangle \cap \langle y \rangle$ is trivial, and once again $D = 1$, i.e. $\ot(ab) = \ot(a, b)$.
		
		The remaining case is when both $\frac t2$ is odd, and $a^{\frac 1{2}\ot(a, b)}b^{\frac 1{2}\ot(a, b)} = c^{\frac r2}$. It is easily seen that in this case $|\langle x \rangle \cap \langle y \rangle| = 2$, and since $\frac t2$ is odd we get $D = 2$, i.e. $\ot(ab) = \frac 12 \cdot \ot(a, b)$ as wished.
	\end{proof}
	
	\begin{remark}
		A natural question is whether all three values in $\{\frac 12, 1, 2\}$ can be realized as the ratio $\ot(ab)/\ot(a, b)$, with $a, b$ elements of a finite nilpotent group of class two. The value $s = 1$ obviously appears, for example when $a$ and $b$ commute. The other two show up in the diherdal group $D_4 = \langle r, s\;|\; r^4, s^2, rsrs\rangle$:
		\begin{gather*}
			\ot(rs, s) = 2, \quad \ot(rs \cdot s) = 4 \\
			\ot(r, s) = 4, \quad \ot(r \cdot s) = 2.
		\end{gather*}
		We leave the verification of these identities to the reader.
	\end{remark}

	\bibliographystyle{acm}
	\bibliography{OrdNilp}
	
\end{document}